\def\supp{\mathop{\rm supp}\nolimits}
\newtheorem{theorem}{Theorem}
\newtheorem{lemma}[theorem]{Lemma}
\newtheorem{proposition}[theorem]{Proposition}
\newtheorem{definition}[theorem]{Definition}
\newtheorem{remark}[theorem]{Remark}
\newtheorem{example}[theorem]{Example}
\renewenvironment{proof}[1][.]{%
\bigskip\noindent{\bf Proof#1 }}{%
\hfill$\blacksquare$\bigskip}
\begin{document}
\pagestyle{myheadings}
\title{The Ergodic Theorem for a new kind of attractor of a GIFS.}
\author[1]{Elismar R. Oliveira
\thanks{
Instituto de Matem\'atica e Estat\'istica - UFRGS\\
Av. Bento Gon\c calves, 9500\\
Porto Alegre - 91500 - RS -Brasil\\
Email: elismar.oliveira@ufrgs.br}
}
\affil[1]{Universidade Federal do Rio Grande do Sul\\
}
\date{\today}
\maketitle

\noindent\rule{\textwidth}{0.4mm}
\begin{abstract} In 1987, J. H. Elton~\cite{MR922361}, has proved the first fundamental result in convergence of IFS, the Elton's Ergodic Theorem. In this work we prove the natural  extension of this theorem to the projected Hutchinson measure $\mu_{\alpha}$ associated to a GIFSpdp $\mathcal{S}=\left(X, (\phi_j:X^{m} \to X)_{j=0,1, ..., n-1}, (p_j)_{j=0,1, ..., n-1}\right),$ in a compact metric space $(X,d)$. More precisely, the average along of the trajectories $x_{n}(a)$ of the GIFS, starting in any initial points $x_0, ..., x_{m-1} \in X$ satisfies, for any $f \in C(X , \mathbb{R})$,
$$\lim_{N\to +\infty} \frac{1}{N}\sum_{n=0 }^{N-1}   f(x_{n}(a)) = \int_{X} f(t) d\mu_{\alpha}(t),$$
for almost all $a \in \Omega=\{0,1, ..., n-1\}^{\mathbb{N}}$, the symbolic space. Additionally, we give some examples and applications to Chaos Games and Nonautonomous Dynamical Systems defined by finite difference equations.
\end{abstract}
\noindent\rule{\textwidth}{0.4mm}\\

\smallskip
\noindent \textbf{Keywords:} Generalized iterated function system with probabilities, Markov operator, Hutchinson measure, Ergodic Theorem, Iterated Function Systems, Dynamical Systems, Chaos Games.
\vspace {.3cm}
\tableofcontents


\section*{Introduction}

In 2008, A. Mihail and R. Miculescu \cite{MR2415407}, has introduced the Generalized Iterated Function Systems (GIFS, for short). They prove that there exist a fractal attractor and give estimates of the rate of convergence for contractive GIFS. In 2009, Alexandru Mihail \cite{MR2568892}, has considered the Hutchinson measure associated to a GIFS with place dependent probabilities (GIFSpdp for short) that generalizes the classical Hutchinson measure, the invariant measure, associated to an Iterated Function System (IFS for short).

In this work, the central idea is to extend the GIFS to an IFS on a bigger space, that we call \emph{Extended GIFS}. From this IFS we get an \emph{Extended Hutchinson measure}. In the rest of the paper, we set up the properties of the extended Hutchinson measure and his relation with the original GIFS.

Using the extension, we prove an ergodic theorem that extends the classic Elton's ergodic theorem for IFS, Elton~\cite{MR922361}, to the Hutchinson measure associated to a GIFS. From our results, we prove a  Chaos game theorem for GIFS. As an application, we get some results on the stability and the asymptotic behavior of nonautonomous dynamical systems defined by finite difference equations. Also, we show haw to recover some properties of Gibbs measures for H\"older potentials through an appropriate GIFS that is builded from an expansive endomorphism.

The paper has three sections. In the Section~\ref{background} we recall the basic facts about GIFS and GIFSpdp, that we will use in the rest of the paper.  In  the Section~\ref{dyn point view ergodic} we introduce the extension of a GIFS and prove the Ergodic Theorem (Theorem~\ref{EltErgTheoremGIFS}). Section~\ref{applications}  is devoted to applications. The main goal here is to prove the Chaos Game Theorem (Theorem~\ref{chaosgametheorem}), allowing us to draw the attractor of the extended GIFS and his projection.

We believe that the tools we develop will be very useful to forthcoming works and for other researchers in this area. The ergodic theorem we prove, and his consequences, represents a real advance in the understand of GIFS.

\section{Background on GIFS and GIFSpdp}\label{background}
In this section, we will recall the basic definitions and results on the theory of GIFS. See R. Miculescu~\cite{MR3180942}, for more details and the notation. We notice that the word ``generalized" has been used in several different ways representing more general components of a classical IFS. Here, generalized, means that functions are from $X^m$ to $X$ instead $X$ to $X$. \\

\subsection{Generalized iteration function system (GIFS)}
Let $(X,d)$ be a compact~\footnote{We assume compactness to avoid technicalities. Many of the results that we present here are true if $(X,d)$ is just complete. It is sufficient because the measures we use are always supported in the attractors that are compact sets.} metric space (typically $X=[0,1]$,  $X=\{0,1,..., d-1\}^{\mathbb{N}}$, $X=[0,1]^{\mathbb{N}}$, etc).  Consider the topology  on $X^{m}$ given by
$$\displaystyle d_{\infty}( (x_1, x_2, ..., x_m), (y_1, y_2, ..., y_m) ) = \max_{i=1,...,m} d(x_i , y_i),$$
then $(X^{m},d_{\infty}) $ is also a compact metric space.

\begin{definition}
A (continuous) \textbf{generalized iterated function system} (GIFS) of degree $m$ is a (finite) family $\mathcal{S}$ of continuous functions $\phi_j: X^{m} \to X$, denoted $\mathcal{S}=(X, (\phi_j)_{j=0...n-1}).$
\end{definition}
See Secelean \cite{MR2932770, MR3198614} for the analogous theory for countable GIFS.
\textbf{\boldmath{In order to avoid technicalities we assume that $m=n=2$ that is, two maps in $X^2$ (see Remark~\ref{simplify m=n=2} for additional details). So the standard GIFS is $\mathcal{S}=(X, (\phi_j)_{j=0,1})$ where $  \phi_{0} , \phi_{1} : X^{2} \to X.$}}
We recall that,
$$Lip(X^2, X)=\{f\; |\; d(f(x_0, y_0), f(x_1, y_1))\leq C d((x_0, y_0), (x_1, y_1)), \; C:=Lip(f)\}$$ and
$$Lip_{a,b}(X^2, X)=\{f \; |\; d(f(x_0, y_0), f(x_1, y_1))\leq a d(x_0, x_1)+ b d(y_0, y_1), \; a,b >0\}.$$
From now on we will assume the contraction hypothesis:\\

\textbf{E1 - } Each $\phi_j: X^{2} \to X$ is in $Lip_{a_{j},b_{j}}(X^2, X)$ and $a_{j}+b_{j} <1$. In particular, all the $\phi_j$ are Lipschitz contractions and $Lip(\phi_j)=a_{j}+b_{j}$.\\

As usual, we denote $\mathcal{K}(X) \subset  \mathcal{P}(X)=2^X$, the family of compact subsets of $X$. Moreover, $\mathcal{K}^{*}(X)=\mathcal{K}(X) \setminus \{ \varnothing\}$.
\begin{definition} Given $f: X^{2} \to X$ we define the associated set function $F_{f}: \mathcal{P}^{*}(X)^{2} \to \mathcal{P}^{*}(X)$ by
$F_{f}( K_1,  K_2)= f(K_1\times  K_2).$
Also, we define the function $F_{\mathcal{S}}: \mathcal{K}^{*}(X)^{2} \to \mathcal{K}^{*}(X)$ associated to $\mathcal{S}$ by
$F_{\mathcal{S}}( K_1,  K_2)= \bigcup_{j=0,1} F_{\phi_{j}}( K_1, K_2).$ A set $Y\subseteq X$ is self-similar(or fractal) with respect to $\mathcal{S}$ if $F_{\mathcal{S}}( Y,  Y)= Y.$
\end{definition}

The map $F_{\mathcal{S}}$ is sometimes called, Fractal operator, Barnsley's Function or Hutchinson's operator, in the literature. From Mihail and Miculescu~\cite{MR2415407}, Theorem 3.5,  we know that under the hypothesis E1, there exists a unique attractor $A(\mathcal{S}) \in \mathcal{K}^{*}(X)$ for the GIFS that depends continuously on $\phi_j$. That is, $A(\mathcal{S})$ is self-similar ($ F_{\mathcal{S}}(A(\mathcal{S}), A(\mathcal{S}))= A(\mathcal{S})$)
and,  for any $H_0,  H_{1} \in \mathcal{K}^{*}(X)$ the recursive sequence of compact subsets $H_{j+2}= F_{\mathcal{S}}( H_{j+1},  H_{j}),$
converges to $A(\mathcal{S})$ with respect to the Hausdorff metric:
$A(\mathcal{S}) = \displaystyle\lim_{j \to \infty}  H_{j}.$

The natural question to make about GIFS is if they offer some new fractals. The positive answer is given by Mihail and Miculescu~\cite{MR2415407} through examples and in the recent work Strobin~\cite{MR3263451} for a more general case. We will discuss that in the end of the Section~\ref{extsection}. We should mention that recently,  in 2015, Dumitru, Ioana, Sfetcu and Strobin~\cite{MR3300886}  has considered  many questions regarding to the extension of the concept of GIFS for topological contractions assuming that the family of maps is not just finite or countable but possibly an arbitrary family $\mathcal{F}$ of maps from $X^m$ to $X$, satisfying suitable hypothesis. Several results ar obtained by using code spaces (see \cite{MR0000000} for details).

\subsection{GIFS with place dependent probabilities (GIFSpdp)}
In this section, we use the notation in R. Miculescu~\cite{MR3180942}. The set $Prob(X)$ will always be the set of regular Borel probabilities on $X$ with respect to the Borel sigma algebra induced by the metric.

\begin{definition}
A \textbf{generalized iteration function system with place dependent probabilities} (GIFSpdp) is a family $\mathcal{S}$ of continuous functions $\phi_j: X^{2} \to X$,  and weight functions (probabilities) $p_j: X^{2} \to [0,1]$ such that  $p_0(x,y) +  p_{1}(x,y) =1$, denoted $\mathcal{S}=(X, (\phi_j)_{j=0,1}, (p_j)_{j=0,1}).$
\end{definition}

One special case is when  the probabilities are given by a potential function $u: X \to \mathbb{R}$, then $p_j(x,y)= u(\phi_{j}(x,y))$ and $u(\phi_{0}(x,y))+u(\phi_{1}(x,y))=1$. We denote such case as a \textbf{uniform} GIFSpdp according to Lopes and Oliveira~\cite{MR2461833}.\\

\noindent\textbf{E2 - } For a GIFSpdp we  assume two hypothesis on the weights:
\begin{enumerate}
  \item[a)] Any $p_i(x,y) \geq \delta >0$ for any $i=0,1,\; x, y \in X$;
  \item[b)] Any $p_i(x,y)$ is in $Lip_{c_{i}, d_{i}}(X^2, [0,1])$ with $c_{i}+ d_{i}<1$.
\end{enumerate}

We recall that $p_i(x,y)$ is Dini continuous if  $\int_{0}^{\varepsilon} \frac{Q_{i}(t)}{t} dt < \infty$ for some $\varepsilon>0$, where $Q_{i}$ is the modulus of continuity of $p_i$,
$$|p_i(x,y)- p_i(x',y')| \leq Q_{i}\left(d((x,y), (x',y'))\right), \; \forall (x,y)\neq  (x',y').$$
For instance if $p_i$ is $\beta$-H\"older ($Q_{i}(t)= k t^{\beta}$) or $p_i$ is $k$-Lipschitz ($Q_{i}(t)= k \, t$) then $p_i(x,y)$ is Dini continuous.

\begin{definition} Given $\mathcal{S}=\left(X, (\phi_j)_{j=0,1}, (p_j)_{j=0,1}\right)$ we define (see R. Miculescu \cite{MR3180942}), the transference operator $B_{\mathcal{S}}: C(X , \mathbb{R}) \to C(X^2, \mathbb{R})$  by
$$ B_{\mathcal{S}}(f) (x,y)= \sum_{j=0,1} p_{j}(x,y) f(\phi_{j}(x,y)),$$
for all $(x,y) \in X^2$. And the Markov operator $ \mathcal{L}_{\mathcal{S}} : Prob(X) \times Prob(X) \to Prob(X)$ by
$$\int_{X} f(t) d\mathcal{L}_{\mathcal{S}} (\mu, \nu)(t) = \int_{X^{2}} B_{\mathcal{S}}(f) (x,y) d(\mu\times \nu)(x,y),$$
for any $\mu, \nu \in Prob(X)$ and any continuous $f: X \to \mathbb{R}$.
\end{definition}

Under the hypothesis E1 and E2 we get, from R. Miculescu \cite{MR3180942}, Theorem 4.4, that:\\

1- There is a unique $\mu_{\mathcal{S}} \in Prob(X)$ such that $\mathcal{L}_{\mathcal{S}} (\mu_{\mathcal{S}}, \mu_{\mathcal{S}})=\mu_{\mathcal{S}}$;\\

2- $\supp(\mu_{\mathcal{S}})=A(\mathcal{S})$, the attractor of the GIFS;\\

3- For any $\mu_0, \mu_1 \in Prob(X)$  the sequence $\mu_{j+2} = \mathcal{L}_{\mathcal{S}} (\mu_{j}, \mu_{j+1})$ converges in the Monge-Kantorovich distance $d_{H}$~\footnote{ $\displaystyle d_{H}(\mu,\nu)=\sup_{Lips(f)\leq 1} \left|\int f d \mu - \int f d\nu \right| $, for any $\mu, \nu \in Prob(X)$.} (see \cite{MR3014680}, Definition 2.53),  to $\mu_{\mathcal{S}} $.\\
\begin{definition}
The Hutchinson measure $\mu_{\mathcal{S}}$ associated to a GIFSpdp is the unique solution of  $\mathcal{L}_{\mathcal{S}} (\mu_{\mathcal{S}}, \mu_{\mathcal{S}})=\mu_{\mathcal{S}}$.
\end{definition}
In the next sections we will consider the fixed point of another operator  to get the \textbf{extended Hutchinson measure} in $X^2$.

\section{Dynamical point of view: the ergodic theorem} \label{dyn point view ergodic}
A GIFS is not a typical discrete dynamical system because $\phi_i: X^{2} \to X$, is not an endomorphism from $X^{2} \to X^{2}$. However we can consider the dynamics of an IFS in $X^{2}$,  whose projection in the first coordinate, is the orbit of the GIFS. In this section, we  assume the hypothesis E1 and E2.

\subsection{The extension of a GIFS to an IFS}\label{extsection}

In order to analyze the orbits of a GIFS one can to embed  $\mathcal{S}=(X, (\phi_j)_{j=0,1})$ in to an IFS  $\hat{\mathcal{S}}=\left(X^{2},  (\hat{\phi}_i (x,y))_{i=0,1}\right)$ where $\hat{\phi}_i : X^{2} \to X^{2},$ is given by $\hat{\phi}_i (x,y) = (y, \phi_i(x,y))$  that is  \textbf{the extension} of $\mathcal{S}.$

\begin{remark} \label{simplify m=n=2}
\textbf{\boldmath{We point out that, make a extension of  a GIFS in $X^{2}$ instead $X^m$, is not actually a restriction. If we consider $(X,d)$, $m, n \geq 2$ (degree $m$ and $n$ maps) and a general GIFS  as a family $\mathcal{S}$ of continuous functions $\phi_j: X^{m} \to X$, denoted $\mathcal{S}=(X, (\phi_j)_{j=0...n-1})$, then his extension will be the  IFS  $\hat{\mathcal{S}}=\left(X^{m},  (\hat{\phi}_i (x))_{i=0...n-1}\right)$ where $\hat{\phi}_i : X^{m} \to X^{m},$ is given by $\hat{\phi}_i (x) = (\theta(x), \,\phi_i(x))$, where $x=(x_{ 0}, ...,x_{ m-2}, x_{m-1})$ and $\theta: X^{m} \to X^{m-1}$ is given by $\theta(x_{ 0}, ...,x_{ m-2}, x_{m-1})=(x_{1}, ...,x_{ m-2})$. Additionally, we produce the orbits by choosing sequences $a=(a_{0}, a_{1}, ....) \in\{0,1, ... n-1\}^{\mathbb{N}}$, which will make de proofs enormously hard to read. So, in the rest of the paper we will make the proofs for $m=n=2$ making easier to recognize the key elements in the demonstrations~\footnote{That is, $\theta(x_{ 0}, x_{1})=x_{1}$ and $\hat{\phi}_i (x_{ 0}, x_{1}) = (\theta(x), \,\phi_i(x))= ( x_{1}, \,\phi_i(x_{ 0}, x_{1}))$, for $i=0,1$.}.}}
\end{remark}

We want to investigate the relation between the dynamics of this  IFS and the properties of the GIFS and its Hutchinson measure.

\begin{definition}\label{GIFSorbit}
Given GIFS $\mathcal{S}=(X, (\phi_j)_{j=0,1})$ and $a=(a_{0}, a_{1}, ....) \in\{0,1\}^{\mathbb{N}}$ a fixed sequence, the orbit $x_{0}, x_{1} \in X$   is the sequence obtained by the nonautonomous recurrence relation $x_{0}(a) = Z_{0, a} (x_0, x_1)=x_{0}$, $x_{1}(a)= Z_{1, a} (x_0, x_1)=x_{1} $ and  $x_{j}(a) = Z_{j, a} (x_0, x_1)$ (or $x_{j} = Z_{j, a}$ for short) for $j\geq 2$ where $Z_{j+1, a} =  \phi_{a_{j-1}}(Z_{j -1, a}, Z_{j, a}), \; j \geq 1.$
\end{definition}

The iterations by  a sequence $a=(a_{0}, a_{1}, ....) \in \Omega=\{0,1\}^{\mathbb{N}}$ defines the behavior of an IFS :
$$\{\hat{\phi}_{a_{0}} (x,y), \,  \hat{\phi}_{a_{1}}(\hat{\phi}_{a_{0}} (x,y)), \, ...\},$$
that is
$\hat{\phi}_{a_{j-1}}\circ\cdots\circ\hat{\phi}_{a_{1}}\circ\hat{\phi}_{a_{0}} (x,y).$
If we take two compact sets $H_0, H_1 \in \mathcal{K}^{*}(X)$ and
$x_i \in H_i, \; i=0,1$ then
  $$x_{2} (a) = \phi_{a_0} (x_0(a), x_1(a)) \in H_{2} =F_{\mathcal{S}}( H_{0},  H_{1}),$$
  $$x_{3} (a) = \phi_{a_1} (x_1(a), x_{2}(a) ) \in H_{3} = F_{\mathcal{S}}( H_{1},  H_{2}), {\rm etc}.$$
\begin{figure}[h!]
  \centering
  \includegraphics[width=6cm]{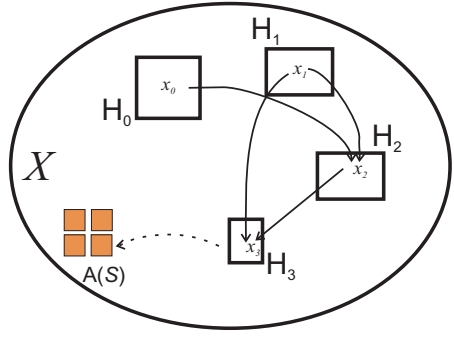}\\
  \caption{Dynamics of $\mathcal{S}$, the orbit of $x_0, x_1$ by $a$. }\label{dyntyp}
\end{figure}

If we define $Z_{0, a} (x_0, x_1)= x_{0} \in H_{0}$, $Z_{1, a} (x_0, x_1)= x_{1}\in H_{1}$ and $Z_{m, a} (x_0, x_1)= x_{m} (a) \in H_{m},$ for $m \geq 2$, then any accumulation point of this sequence will be in $A(\mathcal{S})$, see Figure~\ref{dyntyp}. Thus, this sequences are significant on the asymptotic behavior of the GIFS.

\begin{remark} The orbit by  a GIFS $\mathcal{S}=\left(X, (\phi_j)_{j=0,1}\right)$ is the projection on the first coordinate of the orbit $\{(x_{j}, x_{j+1}), \; j \geq 0\}$ by the IFS associated $\hat{\mathcal{S}}$:
\begin{align*}
  \hat{\phi}_{a_{0}} (x_0, x_1) &= (x_1, \phi_{a_{0}}(x_0, x_1)) & \text{ or } & x_{2}=Z_{2, a} (x_0, x_1)= \phi_{a_{0}}(x_0, x_1), \\
  \hat{\phi}_{a_{1}} (x_1, x_2) &= (x_2, \phi_{a_{1}}(x_1, x_2)) &  \text{ or } & x_{3}=Z_{3, a} (x_0, x_1)= \phi_{a_{1}}(x_1, x_2,)
\end{align*}
and so on, that is
$\hat{\phi}_{a_{j-1}}\circ\cdots\circ\hat{\phi}_{a_{1}}\circ\hat{\phi}_{a_{0}} (x_0, x_1)
= (x_{j}, x_{j+1}), \; j \geq 1.$
\end{remark}

In Mihail and Miculescu \cite{MR2415407}, Ex. 4.3, we found an example of a GIFS $\mathcal{S}=\left(X, (\phi_j)_{j=0,1}\right)$ whose attractor have infinite Hausdorff dimension, thus it is not an attractor of any finite Lipschitz IFS. This means that the GIFS theory gives us new fractal sets. In the recent work of Strobin (see Strobin~\cite{MR3263451}, Theorem 6), the author proves that certain GIFS in $X^m$ for $X \subset \mathbb{R}^{2}$ formed by generalized Matkowski contractions has attractors that are not attractors for any GIFS in $X^r$ with $1\leq r \leq m-1$ in particular for $r=1$ when the GIFS to become a classical IFS in $X$. However, the Proposition~\ref{doubleattractor} shows that this new fractals $A(\mathcal{S})$ contains the projection of  $A(\hat{\mathcal{S}})$, the attractor of the extension. The IFS $\hat{\mathcal{S}}$ is not contractive, thus we need to consider higher powers  to prove the there exists an attractor.

\begin{definition}\label{Higher powers}
Given  an IFS $\mathcal{R}=\left(Z,  (f_i (z))_{i=0,1}\right)$ we define his kth power  as the IFS $\mathcal{R}^{k}=\left(Z,  (f_{i_{1}...i_{k}} (z))_{i=0,1}\right)$ where $f_{i_{1}...i_{k}} (z)= f_{i_{k}}  \circ\cdots \circ f_{i_{1}} (z)$. We said that $\mathcal{R}$  is eventually contractive if $\mathcal{R}^{k}$ is contractive for some $k\geq 1$. Analogously, if $\mathcal{R}=\left(Z,  (f_i (z))_{i=0,1}, (p_i (z))_{i=0,1}\right)$ is a IFSpdp we define his kth-power  as the IFSpdp $\mathcal{R}^{k}$ where $p_{i_{1}...i_{k}} (z)= p_{i_{k}} (f_{i_{k}}  \cdot \cdots \cdot f_{i_{1}} (z))  \circ\cdots \circ p_{i_{1}} (z)$.
\end{definition}

\begin{lemma}\label{eventually contract 2} The IFS $\hat{\mathcal{S}}^2$ is contractive, more precisely  $Lip(\phi_{j})=\lambda =\max_{j} [a_{j}+b_{j} ]<1$ for any $j=0,1$. In particular $\hat{\mathcal{S}}$ has an attractor $A(\hat{\mathcal{S}})=A(\hat{\mathcal{S}}^2)$ \textbf{\boldmath{ (Or, $\hat{\mathcal{S}}^m$ is contractive and $A(\hat{\mathcal{S}})=A(\hat{\mathcal{S}}^m)$, if $\mathcal{S}$ has degree $m$.)}} and $\hat{H}_{n+1}=F_{\hat{\mathcal{S}}}(\hat{H}_n)\to A(\hat{\mathcal{S}})$ for any  $\hat{H}_0 \in \mathcal{K}^{*}(X^{2})$. \end{lemma}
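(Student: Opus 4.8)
The plan is to reduce the whole statement to a single direct estimate on the two-fold compositions, after which the assertions about the attractor follow from the standard theory of contractive IFS together with the semigroup identity for Hutchinson operators.

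First I would compute the maps of $\hat{\mathcal{S}}^2$ explicitly. Writing $\hat{\phi}_i(x,y)=(y,\phi_i(x,y))$, a direct substitution gives
$$\hat{\phi}_j\circ\hat{\phi}_i(x,y)=\bigl(\phi_i(x,y),\,\phi_j(y,\phi_i(x,y))\bigr).$$
Given two points $(x_0,y_0),(x_1,y_1)$, set $u=d(x_0,x_1)$, $v=d(y_0,y_1)$ and $w=\max\{u,v\}=d_{\infty}((x_0,y_0),(x_1,y_1))$. Using \textbf{E1} in each slot, the first coordinate of the image differs by at most $a_iu+b_iv\le(a_i+b_i)w$, while the second differs by at most $a_jv+b_j(a_iu+b_iv)=a_ib_ju+(a_j+b_ib_j)v\le\bigl(a_j+b_j(a_i+b_i)\bigr)w$. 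Since $a_i+b_i<1$, the second coefficient is strictly smaller than $a_j+b_j$, so both coordinates are controlled by $\lambda w$ with $\lambda=\max_j(a_j+b_j)<1$. Taking the maximum over the two coordinates shows that $\hat{\phi}_{ij}=\hat{\phi}_j\circ\hat{\phi}_i$ is a $\lambda$-contraction on the compact space $(X^2,d_{\infty})$, which is exactly the claimed constant. Hence $\hat{\mathcal{S}}^2$ is a contractive IFS, and by the classical Hutchinson theorem (equivalently Mihail--Miculescu, Theorem 3.5, in degree one) it has a unique attractor $A(\hat{\mathcal{S}}^2)$, with $F_{\hat{\mathcal{S}}^2}^{\,n}(\hat{H})\to A(\hat{\mathcal{S}}^2)$ in the Hausdorff metric for every $\hat{H}\in\mathcal{K}^{*}(X^2)$.

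Next I would pass from $\hat{\mathcal{S}}^2$ back to $\hat{\mathcal{S}}$. The key is the identity $F_{\hat{\mathcal{S}}^2}=F_{\hat{\mathcal{S}}}\circ F_{\hat{\mathcal{S}}}$, immediate from distributing unions over the compositions, since $F_{\hat{\mathcal{S}}}(F_{\hat{\mathcal{S}}}(\hat{K}))=\bigcup_{i,j}(\hat{\phi}_j\circ\hat{\phi}_i)(\hat{K})=F_{\hat{\mathcal{S}}^2}(\hat{K})$. Writing $A:=A(\hat{\mathcal{S}}^2)$, so that $F_{\hat{\mathcal{S}}}^{\,2}(A)=A$, I apply $F_{\hat{\mathcal{S}}}$ to both sides to get $F_{\hat{\mathcal{S}}}^{\,2}\bigl(F_{\hat{\mathcal{S}}}(A)\bigr)=F_{\hat{\mathcal{S}}}(A)$; thus $F_{\hat{\mathcal{S}}}(A)$ is also a fixed point of the contraction $F_{\hat{\mathcal{S}}^2}$. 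By uniqueness of that fixed point, $F_{\hat{\mathcal{S}}}(A)=A$, so $A$ is self-similar for $\hat{\mathcal{S}}$ and we may legitimately set $A(\hat{\mathcal{S}}):=A=A(\hat{\mathcal{S}}^2)$.

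Finally, for the convergence $\hat{H}_{n+1}=F_{\hat{\mathcal{S}}}(\hat{H}_n)\to A$ I would split by parity: the even iterates satisfy $\hat{H}_{2n}=F_{\hat{\mathcal{S}}^2}^{\,n}(\hat{H}_0)\to A$, and the odd iterates satisfy $\hat{H}_{2n+1}=F_{\hat{\mathcal{S}}^2}^{\,n}\bigl(F_{\hat{\mathcal{S}}}(\hat{H}_0)\bigr)\to A$, both by contractivity of $F_{\hat{\mathcal{S}}^2}$ applied to the starting sets $\hat{H}_0$ and $F_{\hat{\mathcal{S}}}(\hat{H}_0)$ in $\mathcal{K}^{*}(X^2)$; since both subsequences tend to the same limit, the whole sequence converges to $A$. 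The degree-$m$ case is identical in spirit: the same computation shows $\hat{\mathcal{S}}^m$ is a $\lambda$-contraction (one needs $m$ steps for the shift $\theta$ to push every coordinate through at least one $\phi_i$), and the parity argument is replaced by splitting into residue classes modulo $m$. The only real work is the composition estimate of the first paragraph; everything else is the routine ``eventually contractive implies attractor'' packaging, resting on uniqueness of the fixed point of $F_{\hat{\mathcal{S}}^2}$ and the semigroup identity $F_{\hat{\mathcal{S}}^2}=F_{\hat{\mathcal{S}}}^{\,2}$.
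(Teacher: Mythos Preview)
Your proof is correct and follows essentially the same route as the paper: the same coordinate-wise Lipschitz estimate yielding the constant $\lambda=\max_j(a_j+b_j)$, the same semigroup identity $F_{\hat{\mathcal{S}}^2}=F_{\hat{\mathcal{S}}}^{\,2}$, and the same trick of applying $F_{\hat{\mathcal{S}}}$ to the fixed-point equation and invoking uniqueness to conclude $F_{\hat{\mathcal{S}}}(A)=A$. Your explicit parity-splitting argument for the convergence $\hat{H}_n\to A$ is a slight elaboration of what the paper compresses into the phrase ``from the fix point property,'' but the underlying content is identical.
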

\begin{proof} We know that each $\phi_j: X^{2} \to X$ is in $Lip_{a_{j},b_{j}}(X^2, X)$ and $a_{j}+b_{j} <1$, that is,
$d(\phi_{j}(x,y), \phi_{j}(x',y') ) \leq a_{j}\;d(x,x')+b_{j}\;d(y,y').$ Thus,
\begin{align*}
d(\phi_{ij}(x,y), \phi_{ij}(x',y') ) &=
d((\phi_i(x,y), \phi_j(y, \phi_i(x,y))), \,(\phi_i(x',y'), \phi_j(y', \phi_i(x',y'))))\\
&=\max\left[d(\phi_i(x,y),\,\phi_i(x',y')) ,\; d(\phi_j(y, \phi_i(x,y)),\,\phi_j(y', \phi_i(x',y')))\right]\\
&\leq \max\left[a_{i}\;d(x,x')+b_{i}\;d(y,y')  ,\; a_{j}\;d(y,y')+b_{j}\;d(\phi_i(x,y),\phi_i(x',y'))\right]\\
&\leq \max\left[a_{i}\;d(x,x')+b_{i}\;d(y,y')  ,\; a_{j}\;d(y,y')+b_{j}\;\{a_{i}\;d(x,x')+b_{i}\;d(y,y')\}\right]\\
&\leq \max\left[a_{i}\;d(x,x')+b_{i}\;d(y,y')  ,\; b_{j}a_{i}\;d(x,x')+ \{a_{j}+b_{j}b_{i}\}\;d(y,y')\right]\\
&\leq  \lambda d((x,y),(x',y')),
\end{align*}
since $d((x,y),(x',y')) <  \max d(x,x'), \;d(y,y')$, where $\lambda =\max_{j} [a_{j}+b_{j} ]<1$. Consider the fractal operator $F_{\hat{\mathcal{S}}}: \mathcal{K}^{*}(X^{2}) \to \mathcal{K}^{*}(X^{2})$ associated to $\hat{\mathcal{S}}$ given by
$F_{\hat{\mathcal{S}}}( K)= \bigcup_{j=0,1}  F_{\hat{\phi}_{j}}( K).$

The operator $F_{\hat{\mathcal{S}}^{2}}$ is contractive because $\hat{\phi}_{i}\circ \hat{\phi}_{j}$ are contractions. We notice that $$\displaystyle F_{\hat{\mathcal{S}}}^2 ( K) = \bigcup_{i,j=0,1} F_{\hat{\phi}_{i} \circ\hat{\phi}_{j}}( K) = F_{\hat{\mathcal{S}}^2} ( K).$$ From Dumitru~\cite{MR2879091}, Theorem 1.1, or Kunze~\cite{MR3014680} we get that there  exists a unique compact set $B \in \mathcal{K}^{*}(X^{2})$ such that $F_{\hat{\mathcal{S}}}^2(B)= F_{\hat{\mathcal{S}}^2}(B)=B$ and $H_{n+1}=F_{\hat{\mathcal{S}}^2}(H_n)\to B$ for any $H_0 \in \mathcal{K}^{*}(X^{2})$.

Obviously, $F_{\hat{\mathcal{S}}}^2(B)=B$ implies~\footnote{Indeed, if $F_{\hat{\mathcal{S}}}^2(B)=B$ then $F_{\hat{\mathcal{S}}}(F_{\hat{\mathcal{S}}}^2(B)) =F_{\hat{\mathcal{S}}}(B)$, that is, $F_{\hat{\mathcal{S}}}^2(F_{\hat{\mathcal{S}}}(B)) =F_{\hat{\mathcal{S}}}(B)$. Thus $F_{\hat{\mathcal{S}}}(B)=B$.} that $F_{\hat{\mathcal{S}}}(B)=B$. The set $B$ is the unique set with this property. We call $A(\hat{\mathcal{S}})=B$ the attractor of $\hat{\mathcal{S}}$. From the fix point property we get that $\hat{H}_{n+1}=F_{\hat{\mathcal{S}}}(\hat{H}_n)\to A(\hat{\mathcal{S}})$ for any $\hat{H}_0 \in \mathcal{K}^{*}(X^{2})$.
\end{proof}

In the next example we provide some computational evidence for  the Proposition~\ref{doubleattractor}. In these approximation, we can see that $A(\hat{\mathcal{S}}) \subseteq A(\mathcal{S})^2$.

\begin{example}\label{proj does not} We consider the GIFS $\phi_{j}(x, y)= \frac{1}{3} x + \frac{(-1)^{j}}{4} y + \frac{j}{2}$, for $j=0,1$. A direct computation~\footnote{This computation and his consequence was pointed by one of the referees of this paper.} shows that $\phi_{0}([0,\frac{3}{4}], [0,\frac{3}{4}])=[0,\frac{7}{16}]$ and $\phi_{1}([0,\frac{3}{4}], [0,\frac{3}{4}])=[\frac{5}{16}, \frac{3}{4}]$ thus $F_{\mathcal{S}} ([0,\frac{3}{4}]\times [0,\frac{3}{4}])=[0,\frac{7}{16}]\cup [\frac{5}{16}, \frac{3}{4}]= [0,\frac{3}{4}]= A(\mathcal{S})$. Since the Hausdorff  dimension of $A(\mathcal{S})$ is one, and the Hausdorff  dimension of $\mathrm{proj}_x A(\hat{\mathcal{S}}) $ is less or equal to the dimension of $A(\hat{\mathcal{S}})$ that is strictly less than one it is impossible to have $\mathrm{proj}_x A(\hat{\mathcal{S}}) = A(\mathcal{S})$.

 Running the correspondent chaos game with 10000 iterations we get the picture of $\mathrm{proj}_x A(\hat{\mathcal{S}})$ and $A(\hat{\mathcal{S}})$ in the Figures \ref{attracgifs2} and \ref{attracgifs1}.
\end{example}

\begin{figure}[h!]
  \centering
  \includegraphics[width=8cm]{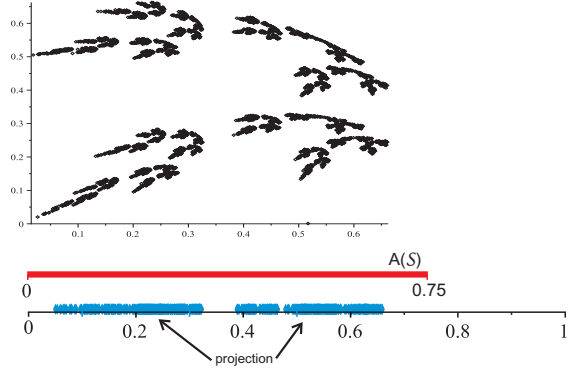}\\
  \caption{$\mathrm{proj}_x A(\hat{\mathcal{S}}) \subsetneq A(\mathcal{S})=[0,\frac{3}{4}]$, the attractor of $\mathcal{S}$. }\label{attracgifs2}
\end{figure}

\begin{figure}[h!]
\centering 
  \includegraphics[width=4.5cm]{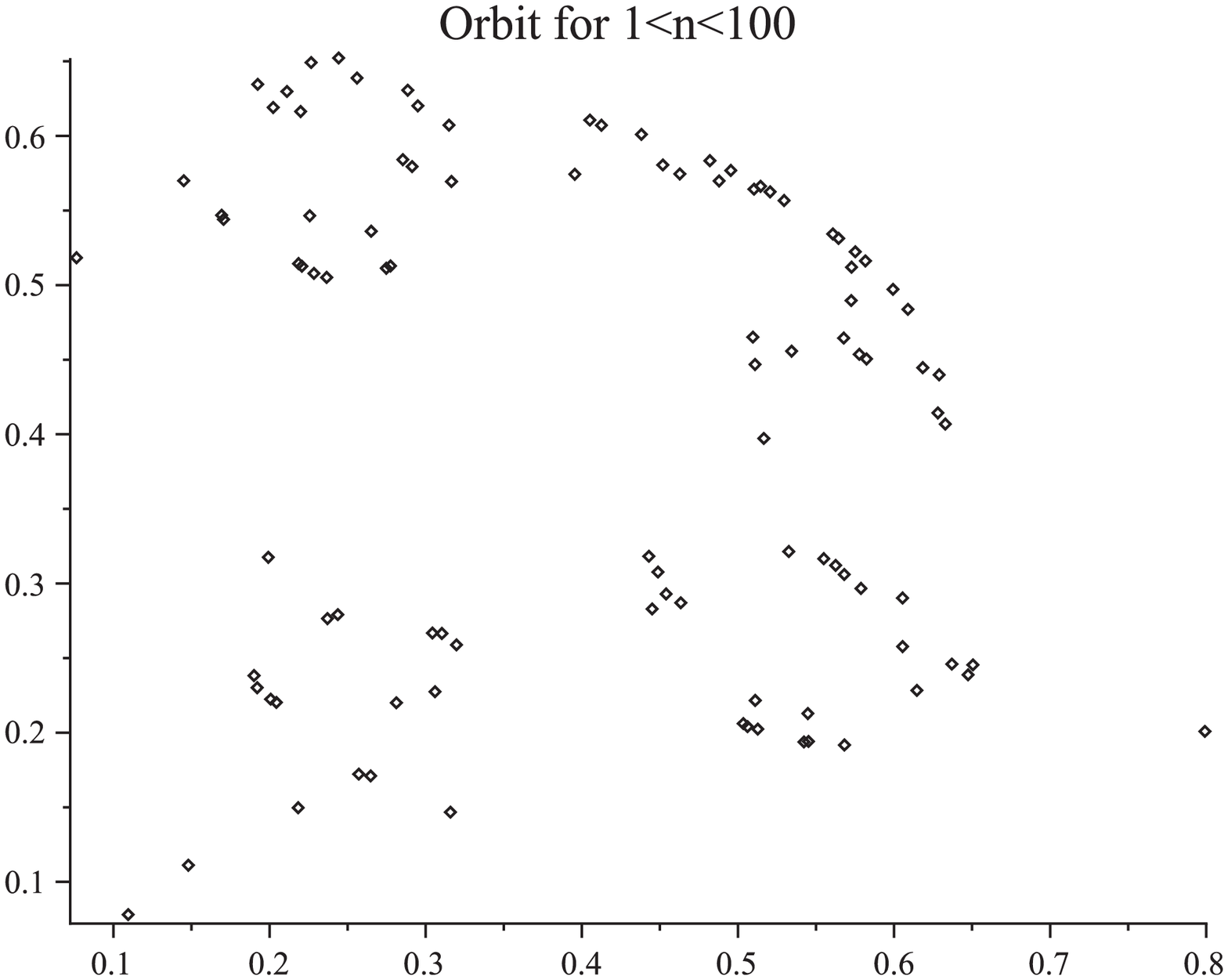} \hspace{.4cm}
  \includegraphics[width=4.5cm]{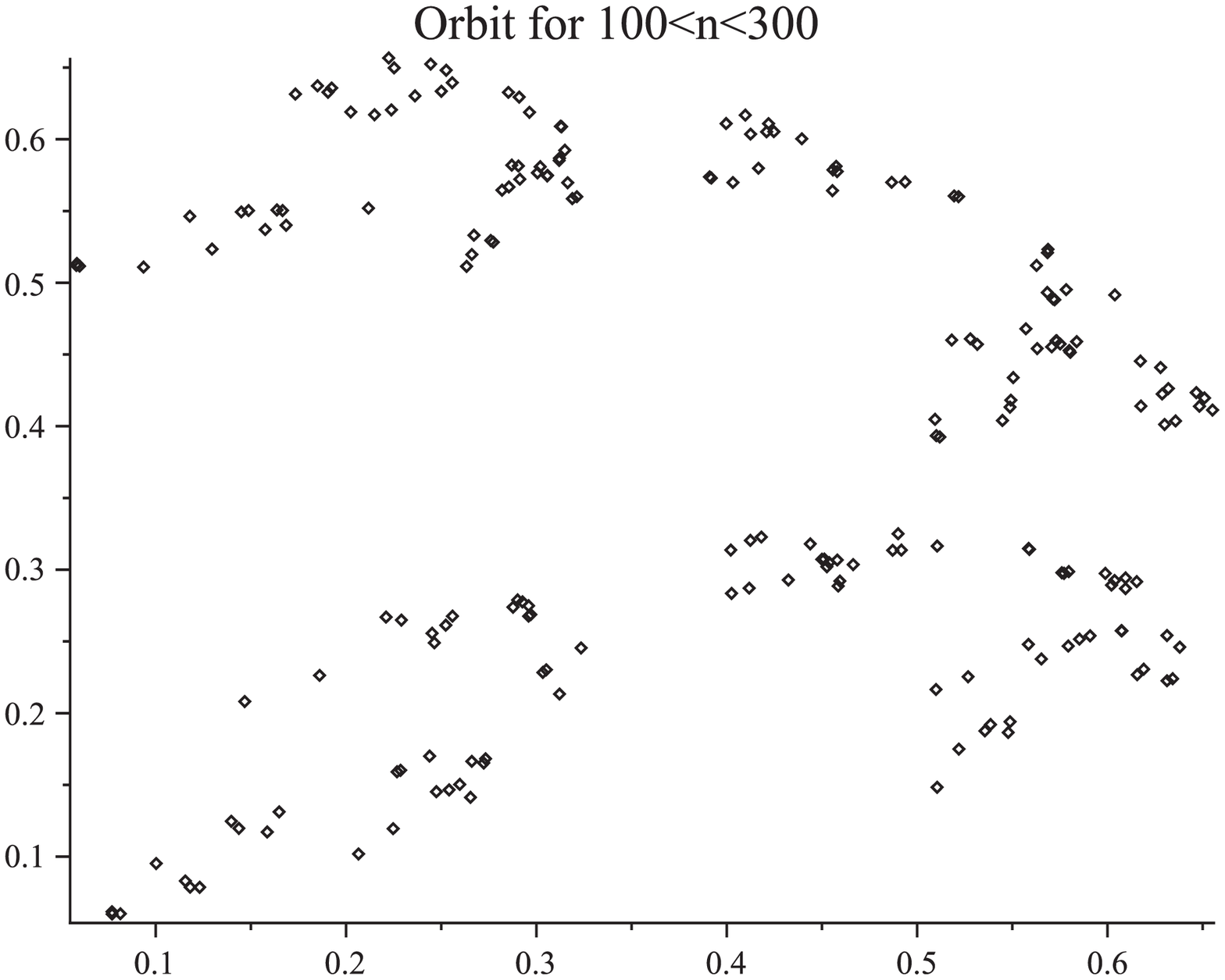}\\ \vspace{.4cm}
  \includegraphics[width=4.5cm]{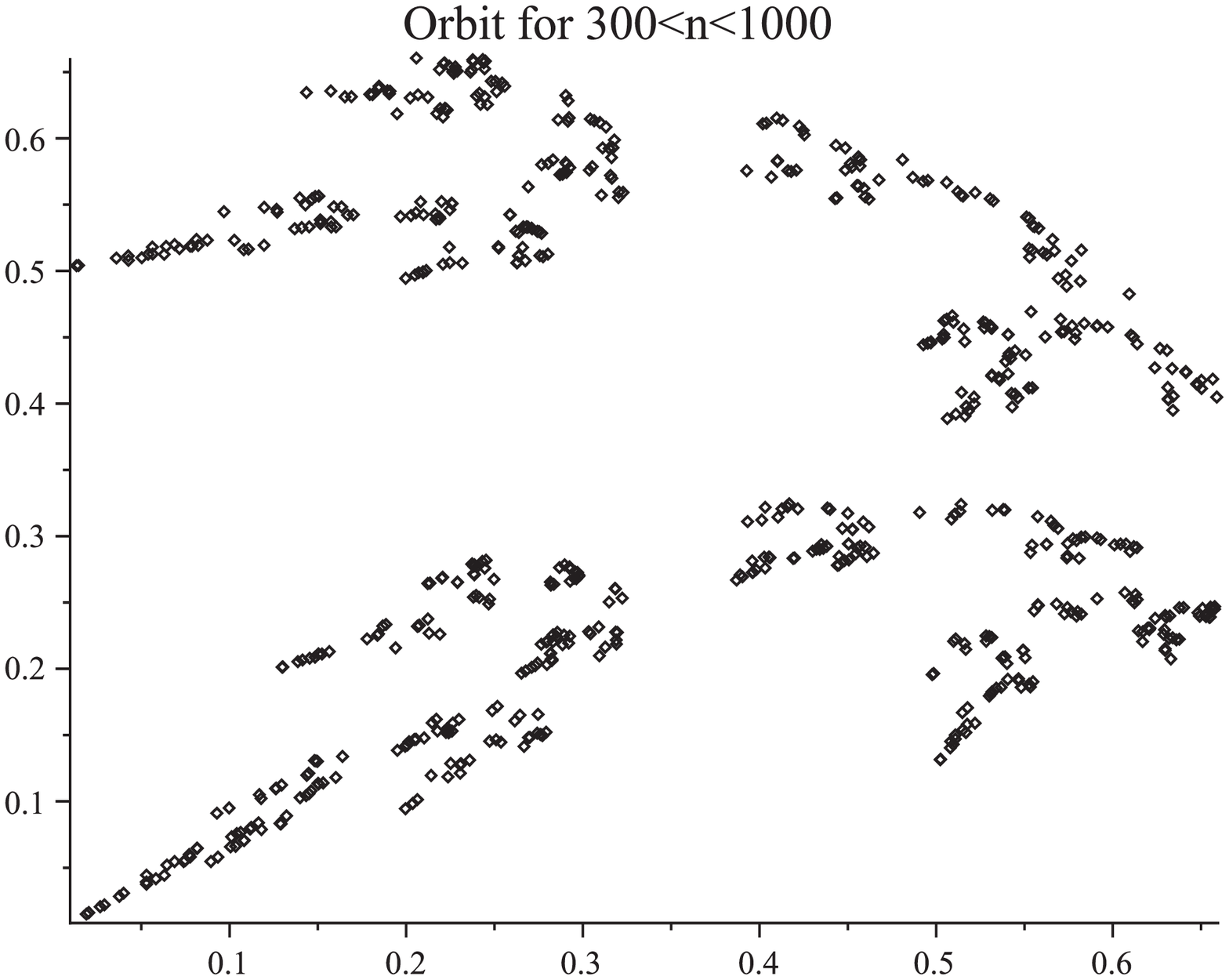}\hspace{.4cm}
  \includegraphics[width=4.5cm]{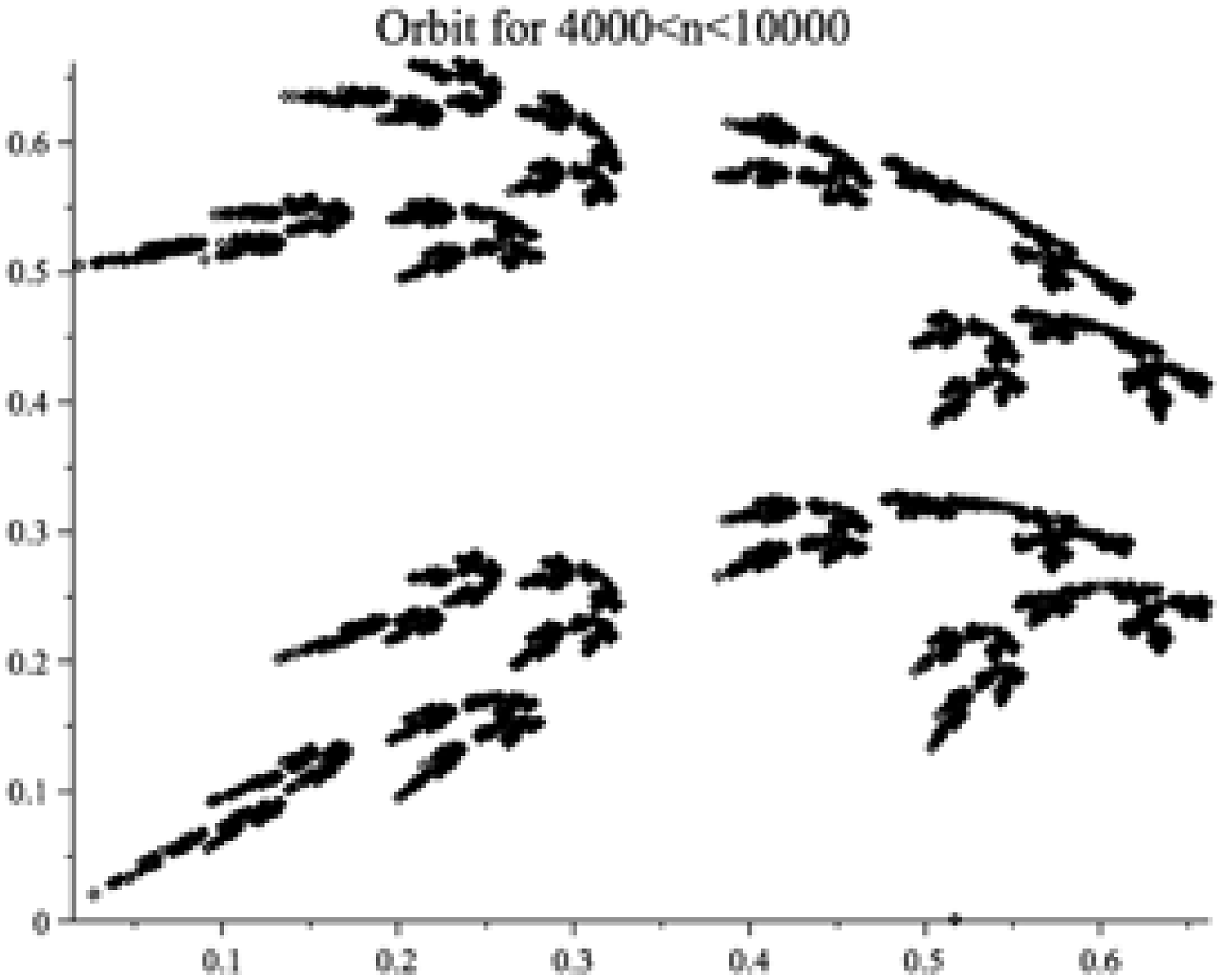}
  \caption{Building  $A(\hat{\mathcal{S}})$, the attractor of $\hat{\mathcal{S}}$, through a random orbit. }\label{attracgifs1}
\end{figure}

\begin{proposition}\label{doubleattractor} Let $A(\mathcal{S})$ be the attractor of $\mathcal{S}$, then $A(\mathcal{S})^2$ is  forward invariant with respect to  the IFS $\hat{\mathcal{S}}$. Moreover, $A(\hat{\mathcal{S}}) \subseteq A(\mathcal{S})^2$ in particular, ${\rm proj}_{x} A(\hat{\mathcal{S}}) \subseteq A(\mathcal{S})$.
\end{proposition}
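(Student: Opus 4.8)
The plan is to first establish the forward invariance directly from the self-similarity of $A(\mathcal{S})$, and then bootstrap it through the iteration scheme of Lemma~\ref{eventually contract 2}. For the forward invariance, I would take an arbitrary point $(x,y) \in A(\mathcal{S})^2$, so that $x,y \in A(\mathcal{S})$, and compute $\hat{\phi}_i(x,y) = (y, \phi_i(x,y))$ for $i=0,1$. The first coordinate $y$ already lies in $A(\mathcal{S})$, and the second coordinate satisfies $\phi_i(x,y) \in F_{\phi_i}(A(\mathcal{S}), A(\mathcal{S})) \subseteq F_{\mathcal{S}}(A(\mathcal{S}), A(\mathcal{S})) = A(\mathcal{S})$ by self-similarity of the GIFS attractor. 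Hence $\hat{\phi}_i(x,y) \in A(\mathcal{S})^2$ for each $i$, and taking the union over $i$ gives $F_{\hat{\mathcal{S}}}(A(\mathcal{S})^2) \subseteq A(\mathcal{S})^2$, which is exactly forward invariance.

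Once forward invariance is in hand, the inclusion $A(\hat{\mathcal{S}}) \subseteq A(\mathcal{S})^2$ follows by iterating. Since $A(\mathcal{S})^2 \in \mathcal{K}^{*}(X^{2})$, Lemma~\ref{eventually contract 2} guarantees that the orbit $\hat{H}_{n+1} = F_{\hat{\mathcal{S}}}(\hat{H}_n)$ started at $\hat{H}_0 = A(\mathcal{S})^2$ converges in the Hausdorff metric to $A(\hat{\mathcal{S}})$. Because $F_{\hat{\mathcal{S}}}$ is monotone with respect to inclusion, forward invariance makes this a nested decreasing sequence $A(\mathcal{S})^2 \supseteq F_{\hat{\mathcal{S}}}(A(\mathcal{S})^2) \supseteq F_{\hat{\mathcal{S}}}^{2}(A(\mathcal{S})^2) \supseteq \cdots$ of nonempty compact sets. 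Its Hausdorff limit therefore coincides with the intersection $\bigcap_n F_{\hat{\mathcal{S}}}^{n}(A(\mathcal{S})^2)$, which is contained in $A(\mathcal{S})^2$; identifying this limit with $A(\hat{\mathcal{S}})$ gives the desired inclusion. Equivalently, one simply observes that a Hausdorff limit of compact sets all contained in the closed set $A(\mathcal{S})^2$ must itself lie in $A(\mathcal{S})^2$.

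Finally, applying the coordinate projection to $A(\hat{\mathcal{S}}) \subseteq A(\mathcal{S})^2 = A(\mathcal{S}) \times A(\mathcal{S})$ yields $\mathrm{proj}_{x} A(\hat{\mathcal{S}}) \subseteq A(\mathcal{S})$, since $\mathrm{proj}_{x}(A(\mathcal{S}) \times A(\mathcal{S})) = A(\mathcal{S})$.

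I do not expect any serious obstacle here, as the argument is essentially formal. The only point requiring a little care is the passage to the Hausdorff limit in the second step, namely that nestedness together with compactness forces the limit to equal the intersection and hence to remain inside $A(\mathcal{S})^2$. This is precisely where I would rely on the compactness of $X^{2}$ (and thus of $A(\mathcal{S})^2$) rather than merely on completeness.
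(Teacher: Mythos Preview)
Your proof is correct and follows essentially the same approach as the paper: forward invariance of $A(\mathcal{S})^2$ comes directly from the self-similarity $F_{\mathcal{S}}(A(\mathcal{S}),A(\mathcal{S}))=A(\mathcal{S})$, and the inclusion $A(\hat{\mathcal{S}})\subseteq A(\mathcal{S})^2$ is obtained by starting the iteration $\hat{H}_{n+1}=F_{\hat{\mathcal{S}}}(\hat{H}_n)$ at $\hat{H}_0=A(\mathcal{S})^2$ and invoking Lemma~\ref{eventually contract 2}. Your treatment of the Hausdorff-limit step is in fact slightly more explicit than the paper's, which simply asserts the conclusion once the iterates remain inside $A(\mathcal{S})^2$.
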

\begin{proof}
Consider the fractal operator $F_{\hat{\mathcal{S}}}: \mathcal{K}^{*}(X^{2}) \to \mathcal{K}^{*}(X^{2})$ associated to $\hat{\mathcal{S}}$.
We notice that for $K= K_1 \times K_2$ we have
$\displaystyle F_{\hat{\mathcal{S}}}( K_1 \times  K_2)= \bigcup_{j=0,1}  F_{\hat{\phi}_{j}}( K_1, K_2).$

Since $A(\mathcal{S})$ is self-similar ($ F_{\mathcal{S}}(A(\mathcal{S}), A(\mathcal{S}))= A(\mathcal{S})$) we get
\begin{align*}
  F_{\hat{\mathcal{S}}}( A(\mathcal{S})^2) &= \bigcup_{j=0,1} F_{\hat{\phi}_{j}}( A(\mathcal{S})^2)= \bigcup_{j=0,1}  \hat{\phi}_{j}( A(\mathcal{S})^2)\\
  &= \bigcup_{j=0,1}  \{(y,  \phi_{j}(x,y))| x,y \in A(\mathcal{S})\}  \\
  &\subseteq \bigcup_{j=0,1}  ( A(\mathcal{S}),  \phi_{j}( A(\mathcal{S})^2))  \\
  &=(A(\mathcal{S}), F_{\mathcal{S}}( A(\mathcal{S})^2))= A(\mathcal{S})^2,
\end{align*}
that is, $A(\mathcal{S})^2$ is  forward invariant with respect to the IFS $\hat{\mathcal{S}}$.

Taking $H_0 = H_{1}=A(\mathcal{S}) \in \mathcal{K}^{*}(X)$ and $\hat{H}_0 = H_0 \times H_{1}=A(\mathcal{S})^2 \in \mathcal{K}^{*}(X^{2})$ the recursive sequence of compact subsets $\hat{H}_{n+1}= F_{\hat{\mathcal{S}}}( \hat{H}_{n}) \subseteq A(\mathcal{S})^2$ thus, $A(\hat{\mathcal{S}}) \subseteq A(\mathcal{S})^2$.
\end{proof}

\begin{remark}\label{eventually contract 2 probabilities} To study higher powers of a GIFSpdp~\footnote{We recall that the IFSpdp  $\hat{\mathcal{S}}^2=\left(X^{2},  (\hat{\phi}_{ij} )_{_{i,j\in\{0,1\}^2}}, (p_{ij})_{_{i,j\in\{0,1\}^2}}\right),$ is the 2nd-power extension of $\mathcal{S},$ where each $\hat{\phi}_{ij}: X^{2} \to X^{2}$ is defined by
$\hat{\phi}_{ij} (x,y) = (\hat{\phi}_i \circ \hat{\phi}_j )(x,y)= (\phi_j(x,y), \phi_i(y, \phi_j(x,y))),$ and
$p_{ij}(x,y)=p_{i}(x,y) p_{j}(\hat{\phi}_{i}(x,y)).$ The same definition holds for $\hat{\mathcal{S}}^m$,\; $m\geq 2$.} we need also consider the regularity of the weights in the IFSpdp $\hat{\mathcal{S}}=\left(X^{2},  (\hat{\phi}_i )_{i=0,1}, (\hat{p}_j)_{j=0,1} \right).$ In fact, if $p_i(x,y) \in Lip_{c_{i}, d_{i}}(X^2, [0,1])$ with $c_{i}+ d_{i}<1$ then,
$p_{ij}(x,y)$ satisfy $|p_{ij}(x,y)- p_{ij}(x',y')|\leq 2\; d((x,y), (x',y')$. In particular each $p_{ij}$ is Dini continuous with modulus of continuity $Q_{ij}(t) = 2\;t$.

To see this, we compute the distance
\begin{align*}
  |p_{ij}(x,y)- p_{ij}(x',y')| & = |p_{i}(x,y) p_{j}(\hat{\phi}_{i}(x,y))- p_{i}(x',y') p_{j}(\hat{\phi}_{i}(x',y'))| \\
   & \leq |p_{i}(x,y) p_{j}(\hat{\phi}_{i}(x,y))- p_{i}(x ,y ) p_{j}(\hat{\phi}_{i}(x',y'))|\\
   &+|p_{i}(x,y) p_{j}(\hat{\phi}_{i}(x',y'))- p_{i}(x',y') p_{j}(\hat{\phi}_{i}(x',y'))| \\
   & \leq |p_{i}(x,y)| \; |p_{j}(\hat{\phi}_{i}(x,y))-  p_{j}(\hat{\phi}_{i}(x',y'))|\\
   &+|p_{i}(x,y)- p_{i}(x',y')| \; |p_{j}(\hat{\phi}_{i}(x',y'))| \\
   & \leq  c_{j} d(y,y') + d_{j} d(\phi_{i}(x,y),\phi_{i}(x',y'))\\
   &+ c_{i} d(x,x') + d_{i}d(y,y')\\
   & \leq  c_{j} d(y,y') + d_{j} [a_{i} d(x,x') + b_{i}d(y,y')]\\
   &+ c_{i} d(x,x') + d_{i}d(y,y')\\
   &\leq [c_{i} + d_{j} a_{i}] d(x,x') + [c_{j} + d_{j}b_{i} + d_{i}]d(y,y')\\
   &= q_{ij} d(x,x') + r_{ij}d(y,y'),
\end{align*}
where $q_{ij}+ r_{ij}= [c_{i} + d_{j} a_{i}] + [c_{j} + d_{j}b_{i} + d_{i}] <2$.

\end{remark}

\subsection{Extended GIFS and the holonomic condition}
Given a GIFSpdp $\mathcal{S}=\left(X, (\phi_j)_{j=0,1}, (p_j)_{j=0,1}\right)$ we can to embed it, in to an IFSpdp   $\hat{\mathcal{S}}$ by setting $\hat{\mathcal{S}}=\left(X^{2},  (\hat{\phi}_j )_{j=0,1}, (\hat{p}_j)_{j=0,1} \right).$ As before $\hat{\phi}_j (x,y) = (y, \phi_j(x,y)),$ but we keep $\hat{p}_j = p_j: X^{2} \to [0,1]$. We consider the holonomic structure for an IFS introduced in Lopes and Oliveira~\cite{MR2461833}. Let $\hat{\sigma} : X^2 \times \Omega \to X^2 \times \Omega$ be the skill map

$$\hat{\sigma}((x,y),w)=\left(\hat{\phi}_{X_{0}(w)}(x,y), \sigma(w)\right)= \left(y, \phi_{X_{0}(w)}(x,y), \sigma(w)\right),$$
where $\Omega=\{0,1\}^{\mathbb{N}}$, $\sigma: \Omega \to\Omega$ is given by $\sigma(w_{0}, w_{1}, ...)=(w_{1}, w_{2}, ...)$ and $X_{k}: \Omega \to \{0,1\}$ is the projection on the coordinate $k$. A cylinder is the set $\overline{w_0 \cdots w_{n-1}}=\{ w \in \Omega \;|\; X_{0}(w)= w_0, ...,  X_{n-1}(w)=w_{n-1}\} \subset \Omega$. We recall the recurrence relation $x_{0} = Z_{0, w} (x_0, x_1)=x$, $x_{1}= Z_{1, w} (x_0, x_1)=y $ and  $x_{j} = Z_{j, w} (x_0, x_1)$ for $j\geq 2$ where $Z_{j+1, w} =  \phi_{w_{j-1}}(Z_{j-1 , w}, Z_{j, w}), \; j \geq 1$, so  $$\hat{\sigma}^{n}(x,y,w)=(Z_{n,w}(x,y), Z_{n+1,w}(x,y),\sigma^{n}(w))= (x_{n} , x_{n+1} ,\sigma^{n}(w)).$$
The ergodic averages evaluated on the orbits of $\hat{\sigma}^{n}$ allow us to define certain measures in $X^2 \times \Omega$ that captures the behavior of the IFSpdp
\begin{align*}
  \int_{X^2 \times \Omega} h(x,y,w) d\eta_{_{N,a, (x_0, x_1)}}(x,y,w) &=\frac{1}{N} \sum_{i=0}^{N-1}h(\hat{\sigma}^{n}(x,y,a)) \\
 &=\frac{1}{N} \sum_{i=0}^{N-1}h(x_{i}(a) , x_{i+1}(a)  , \sigma^{i}(a)),
\end{align*}
for all $ h \in C( X^2 \times \Omega , \mathbb{R})$.

Any weak limit of $(\mu_{_{N, a, (x_0, x_1)}})_{_{N\geq 0}}$ is a measure  $\mu_{a, (x_0, x_1)}$, that satisfy $ \int_{X^2 \times \Omega} g \circ \hat{\sigma} \; d\mu_{a, (x_0, x_1)}=  \int_{X^2 \times \Omega} g \; d\mu_{a, (x_0, x_1)},\quad \forall g \in C(X^2, \mathbb{R}).$  This measures are called holonomic  in Lopes and Oliveira \cite{MR2461833}. Since $Prob(X^2 \times \Omega)$ is compact we know that the set of those probabilities is obviously  not empty.

\begin{definition}The set of holonomic probabilities with respect to the  extended GIFS $\hat{\mathcal{S}}=\left(X^{2},  (\hat{\phi}_i (x,y))_{i=0,1} \right)$ is the set of probabilities $\eta$ in $X^2 \times \Omega$ such that
$ \int_{X^2 \times \Omega} g \circ \hat{\sigma} \; d\eta=  \int_{X^2 \times \Omega} g \; d\eta,
\quad \forall g \in C(X^2, \mathbb{R}).$ In other words $$ \displaystyle \int_{X^2 \times \Omega} g(\hat{\phi}_{X_{0}(w)}(x,y))  \; d\eta=  \int_{X^2 \times \Omega} g(x,y) \; d\eta,
\quad \forall g\in C(X^2, \mathbb{R}).$$
\end{definition}

\begin{lemma}\label{holondisint}
Any holonomic measure $\eta$  induces  a \textbf{measurable} GIFSpdp
$\hat{\mathcal{S}}=\left(X^{2},  (\hat{\phi}_i)_{i=0,1}, (p_i)_{i=0,1} \right),$
where $p_i(x,y)=J_{(x,y)}(\overline{i}), i=0,1$ are measurable weights and  $d\eta(x,y,w)=dJ_{(x,y)}(w) \; d\alpha(x,y)$ is the disintegration of $\eta$. If $\eta$ is such that $p_i(x,y)=J_{(x,y)}(\overline{i}), i=0,1$ are continuous then $\hat{\mathcal{S}}$ is actually a (continuous, finite, ...) GIFSpdp.
\end{lemma}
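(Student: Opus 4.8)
The plan is to obtain the pair $(p_i)_{i=0,1}$ as the conditional cylinder masses of a disintegration of $\eta$ along the projection onto $X^2$, so that the whole lemma reduces to one invocation of the Disintegration (Rokhlin) Theorem followed by two elementary verifications.

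First I would set $\alpha := (\pi_{X^2})_{*}\eta \in Prob(X^2)$, where $\pi_{X^2}: X^2\times\Omega \to X^2$ is the canonical projection. Since $X$ is compact metric, $X^2\times\Omega = X^2\times\{0,1\}^{\mathbb{N}}$ is again a compact metric space, hence a standard Borel (Polish) space; the Disintegration Theorem therefore applies to the Borel probability $\eta$ and the measurable map $\pi_{X^2}$. It produces a family $\{J_{(x,y)}\}_{(x,y)\in X^2}$ of Borel probabilities on $\Omega$, defined for $\alpha$-almost every $(x,y)$, such that (i) for every Borel $C\subseteq\Omega$ the map $(x,y)\mapsto J_{(x,y)}(C)$ is $\alpha$-measurable, and (ii) $\int_{X^2\times\Omega} h\,d\eta = \int_{X^2}\bigl(\int_{\Omega} h(x,y,w)\,dJ_{(x,y)}(w)\bigr)d\alpha(x,y)$ for every $\eta$-integrable $h$. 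This is precisely the decomposition $d\eta(x,y,w)=dJ_{(x,y)}(w)\,d\alpha(x,y)$ asserted in the statement.

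Next I would define $p_i(x,y) := J_{(x,y)}(\overline{i})$ for $i=0,1$, where $\overline{i}=\{w\in\Omega : X_{0}(w)=i\}$ is the length-one cylinder. Two checks then finish the measurable part. Measurability of each $p_i$ is immediate from property (i) with $C=\overline{i}$. For the normalization, note that $\overline{0}$ and $\overline{1}$ are disjoint with $\overline{0}\cup\overline{1}=\Omega$; since each $J_{(x,y)}$ is a probability, $p_0(x,y)+p_1(x,y)=J_{(x,y)}(\overline{0})+J_{(x,y)}(\overline{1})=J_{(x,y)}(\Omega)=1$ for $\alpha$-a.e. $(x,y)$. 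As the maps $\hat{\phi}_i(x,y)=(y,\phi_i(x,y))$ are already continuous, the triple $(X^2,(\hat{\phi}_i)_{i=0,1},(p_i)_{i=0,1})$ is then a measurable GIFSpdp. Finally, if the functions $p_i$ so produced happen to be continuous, then by the very definition of a GIFSpdp the triple is a genuine continuous (and finite) GIFSpdp, which settles the last sentence.

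The only substantive ingredient is the Disintegration Theorem; everything else is bookkeeping. Accordingly, the point to handle carefully is the standard-Borel hypothesis together with the almost-everywhere nature of the disintegration: the measures $J_{(x,y)}$, and hence $p_0,p_1$, are only determined $\alpha$-a.e., so the measurability statements and the identity $p_0+p_1=1$ must be read modulo $\alpha$-null sets, after which one fixes arbitrary representatives (say $p_0\equiv 1$, $p_1\equiv 0$) off a null set to obtain everywhere-defined weights. I do not expect the holonomic condition on $\eta$ to be needed for this particular lemma; it enters only in the subsequent dynamical identities, not in the construction of the induced weights.
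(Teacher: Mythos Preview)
Your argument is correct and takes the same route as the paper's: invoke the disintegration theorem to write $d\eta=dJ_{(x,y)}(w)\,d\alpha(x,y)$ and set $p_i(x,y)=J_{(x,y)}(\overline{i})$; your version is in fact more careful than the paper's about the Polish-space hypothesis and the a.e.\ caveats. Regarding your closing remark: the paper's proof does continue past the disintegration and uses the holonomic condition, plugging in $h(x,y,w)=g(\hat{\phi}_{X_0(w)}(x,y))$ to obtain $\int_{X^2} B_{\hat{\mathcal{S}}}(g)\,d\alpha=\int_{X^2} g\,d\alpha$, i.e.\ $\mathcal{L}_{\hat{\mathcal{S}}}(\alpha)=\alpha$. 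That invariance is not in the lemma statement, but the paper cites it immediately afterwards as a consequence of this lemma, so your diagnosis that the holonomic hypothesis is needed only for the subsequent dynamical identity is exactly right.
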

\begin{proof}
By disintegration (see Dellacherie \cite{MR521810}) we have, for any holonomic measure $\eta$, a decomposition $d\eta(x,y,w)=dJ_{(x,y)}(w) \; d\alpha(x,y)$,
$$ \int_{X^2 \times \Omega} h(x,y,w)  \; d\eta(x,y,w)= \int_{X^2 } \int_{\Omega} h(x,y,w) \; dJ_{(x,y)}(w) \; d\alpha(x,y),$$
$\quad \forall h\in C(X^2 \times \Omega, \mathbb{R}),$ where the family of probability kernels $J_{(x,y)}$ is unique $\alpha$ almost everywhere.
A particular case is  $h(x,y,w)= g(\hat{\phi}_{X_{0}(w)}(x,y))$ for $g \in C(X^2, \mathbb{R})$. Then, the holonomic condition turn in to
$$ \int_{X^2 } \int_{\Omega} g(\hat{\phi}_{X_{0}(w)}(x,y)) \; dJ_{(x,y)}(w) \; d\alpha(x,y)= \int_{X^2 } \int_{\Omega} g(x,y) \; dJ_{(x,y)}(w) \; d\alpha(x,y)$$
$$ \int_{X^2 } \sum_{i=0,1} J_{(x,y)}(\overline{i}) g(\hat{\phi}_{i}(x,y))  \; d\alpha(x,y)= \int_{X^2 }  g(x,y)  \; d\alpha(x,y),$$
$\forall g\in C(X^2, \mathbb{R}).$
In other words, the marginal $\alpha(x,y)$ is an invariant measure for the dual of the operator $B_{\hat{\mathcal{S}}}(g) (x,y)= \sum_{i=0,1} J_{(x,y)}(\overline{i}) g(\hat{\phi}_{i}(x,y)):$
$$ \int_{X^2 } B_{\hat{\mathcal{S}}}(g) (x,y) \; d\alpha(x,y)= \int_{X^2 }  g(x,y)  \; d\alpha(x,y),$$
$\forall g\in C(X^2, \mathbb{R}).$
\end{proof}

The Markov operator associated to the extended GIFSpdp  arises naturally from Lemma~\ref{holondisint} and makes us to consider a new class of Markov operators for a GIFSpdp.

\begin{definition} The \textbf{extended operators} associated to the GIFSpdp $\mathcal{S}$ are the usual operators to the IFSpdp $\hat{\mathcal{S}}$:
\begin{enumerate}
  \item[1- ] $B_{\hat{\mathcal{S}}}: C(X^2 , \mathbb{R}) \to C(X^2, \mathbb{R})$  by
$$ B_{\hat{\mathcal{S}}}(g) (x,y)= \sum_{j=0,1} p_{j}(x,y) g(\hat{\phi}_{j}(x,y)),\; \forall (x,y) \in X^2.$$
  \item[2- ] $\mathcal{L}_{\hat{\mathcal{S}}} : Prob(X^2) \to Prob(X^2)$ by
$$\int_{X^2} g(x,y) d\mathcal{L}_{\hat{\mathcal{S}}} (\alpha)(x,y) = \int_{X^{2}} B_{\hat{\mathcal{S}}}(g) (x,y) d\alpha(x,y),$$
for any $\alpha \in Prob(X^2)$ and $g: X^2 \to \mathbb{R}$.
\end{enumerate}
\end{definition}

\begin{definition} The $\eta$-operators  associated to the holonomic measure $d\eta(x,y,w)=dJ_{(x,y)}(w) \; d\alpha(x,y)$ are the extended operators $B_{\hat{\mathcal{S}}}$ and $\mathcal{L}_{\hat{\mathcal{S}}}$ associated to the GIFSpdp $\hat{\mathcal{S}}=\left(X^{2},  (\hat{\phi}_i )_{i=0,1}, (p_i)_{i=0,1} \right),$
where $p_i(x,y)= J_{(x,y)}(\overline{i})=J_{(x,y)}(\{w|w_{0}=i\}).$ By Lemma~\ref{holondisint}, $\alpha$ is a fixed point,
$\mathcal{L}_{\hat{\mathcal{S}}}(\alpha)= \alpha.$
\end{definition}

Generally speaking, the weights $p_i(x,y)= J_{(x,y)}(\overline{i})=J_{(x,y)}(\{w|w_{0}=i\})$ are not continuous. However it can happen if we start with the Markov process with place dependent probabilities, associated to a GIFSpdp (a Feller process).

\begin{definition} \label{HolLiftDef} Given a fixed point \footnote{There exists fixed points  because $\mathcal{L}_{\hat{\mathcal{S}}}$ is a continuous operator and $Prob(X^2)$ is compact and convex (Schauder fixed point theorem).} $\alpha$ of the extended Markov operator  ($\mathcal{L}_{\hat{\mathcal{S}}} (\alpha)=\alpha$) associated to a GIFSpdp $\mathcal{S}=\left(X, (\phi_j)_{j=0,1}, (p_j)_{j=0,1}\right),$ one can introduce a measure  in $\Omega$ indexed by $(x_1=y,x_0=x)$ by considering his orbit $x_n=Z_{n,w}(x,y)$ as a Markov process with place dependent probabilities. We assign probabilities to each cylinder $\overline{w_0 \cdots w_{n-1}} \subset \Omega$ by
$$P_{(x,y)}(\overline{w_0 \cdots w_{n-1}})=p_{w_0}(x_0 ,x_1 ) \cdots p_{w_{n-1}}(x_{n-1}, x_{n}).$$
The product measure $d\eta(x,y,w)=dP_{(x,y)}(w)d\alpha(x,y),$ is a holonomic measure called the \textbf{holonomic lifting}~(see Lopes and Oliveira \cite{MR2461833}) of  the measure $\alpha$.
Indeed,
\begin{align*}
\int_{X^2 \times \Omega} g(\hat{\phi}_{X_{0}(w)}(x,y))  \; d\eta&= \int_{X^2 }\int_{ \Omega} g(\hat{\phi}_{X_{0}(w)}(x,y))  \; dP_{(x,y)}(w)d\alpha(x,y) \\
&= \int_{X^2 }B_{\hat{\mathcal{S}}}(g) (x,y) d\alpha(x,y)= \int_{X^2 }g(x,y)d\mathcal{L}_{\hat{\mathcal{S}}}\alpha(x,y)\\
&=  \int_{X^2} g(x,y) \; d\alpha(x,y)=  \int_{X^2 \times \Omega} g(x,y) \; d\eta.
\end{align*}
\end{definition}

We recall that the IFSpdp  $\hat{\mathcal{S}}^2=\left(X^{2},  (\hat{\phi}_{ij} )_{_{i,j\in\{0,1\}^2}}, (p_{ij})_{_{i,j\in\{0,1\}^2}}\right),$ is the 2nd-power extension of $\mathcal{S},$ where each $\hat{\phi}_{ij}: X^{2} \to X^{2}$ is defined by
$\hat{\phi}_{ij} (x,y) = (\hat{\phi}_i \circ \hat{\phi}_j )(x,y)= (\phi_j(x,y), \phi_i(y, \phi_j(x,y))),$ and
$p_{ij}(x,y)=p_{i}(x,y) p_{j}(\hat{\phi}_{i}(x,y)).$

\begin{lemma}\label{Power 2 IFSpdp fixed} Let  $\hat{\mathcal{S}}$ be an IFSpdp then, $\mathcal{L}_{\hat{\mathcal{S}}}$ has a unique fixed point $\alpha$(ergodic with respect to the Markov process generated by the IFSpdp). In particular, $\supp(\alpha)=A(\hat{\mathcal{S}})$. \end{lemma}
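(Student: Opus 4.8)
The plan is to exploit the eventual contractivity of $\hat{\mathcal{S}}$: although $\hat{\mathcal{S}}$ itself is not contractive, its square $\hat{\mathcal{S}}^2$ is a genuine contractive IFSpdp on $X^2$, and for such systems existence, uniqueness, ergodicity and the support of the stationary measure are classical. First I would verify that $\hat{\mathcal{S}}^2$ meets the hypotheses of that classical theory (the degree-one case of the results quoted from R.~Miculescu~\cite{MR3180942}, in the form valid for positive, Dini-continuous weights). The contraction ratio is $\lambda=\max_j[a_j+b_j]<1$ by Lemma~\ref{eventually contract 2}; the weights $p_{ij}(x,y)=p_i(x,y)\,p_j(\hat{\phi}_i(x,y))$ are bounded below by $\delta^2>0$ thanks to E2(a); and they are Dini continuous with modulus $Q_{ij}(t)=2t$ by Remark~\ref{eventually contract 2 probabilities}. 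Hence $\mathcal{L}_{\hat{\mathcal{S}}^2}$ admits a unique fixed point $\alpha$, ergodic for the associated Markov process, with $\supp(\alpha)=A(\hat{\mathcal{S}}^2)$.

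Next I would transfer this conclusion from $\hat{\mathcal{S}}^2$ back to $\hat{\mathcal{S}}$. The key algebraic identity is $\mathcal{L}_{\hat{\mathcal{S}}^2}=\mathcal{L}_{\hat{\mathcal{S}}}\circ\mathcal{L}_{\hat{\mathcal{S}}}$, obtained by dualizing the one-line computation $B_{\hat{\mathcal{S}}}^2(g)(x,y)=\sum_{i,j}p_i(x,y)\,p_j(\hat{\phi}_i(x,y))\,g(\hat{\phi}_j\circ\hat{\phi}_i(x,y))=B_{\hat{\mathcal{S}}^2}(g)(x,y)$. Granting the identity, $\mathcal{L}_{\hat{\mathcal{S}}}(\alpha)$ is again a fixed point of $\mathcal{L}_{\hat{\mathcal{S}}^2}$, since $\mathcal{L}_{\hat{\mathcal{S}}^2}(\mathcal{L}_{\hat{\mathcal{S}}}(\alpha))=\mathcal{L}_{\hat{\mathcal{S}}}(\mathcal{L}_{\hat{\mathcal{S}}}^2(\alpha))=\mathcal{L}_{\hat{\mathcal{S}}}(\mathcal{L}_{\hat{\mathcal{S}}^2}(\alpha))=\mathcal{L}_{\hat{\mathcal{S}}}(\alpha)$; by uniqueness for $\hat{\mathcal{S}}^2$ this forces $\mathcal{L}_{\hat{\mathcal{S}}}(\alpha)=\alpha$, so $\alpha$ is a fixed point of $\mathcal{L}_{\hat{\mathcal{S}}}$ (no appeal to Schauder is needed for existence). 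Conversely, any fixed point $\beta$ of $\mathcal{L}_{\hat{\mathcal{S}}}$ satisfies $\mathcal{L}_{\hat{\mathcal{S}}^2}(\beta)=\mathcal{L}_{\hat{\mathcal{S}}}^2(\beta)=\beta$, hence $\beta=\alpha$ by uniqueness. Thus $\alpha$ is the unique fixed point of $\mathcal{L}_{\hat{\mathcal{S}}}$, and $\supp(\alpha)=A(\hat{\mathcal{S}}^2)=A(\hat{\mathcal{S}})$ by Lemma~\ref{eventually contract 2}. Ergodicity of $\alpha$ for the Markov process generated by $\hat{\mathcal{S}}$ is inherited, since unique invariance of the one-step operator is equivalent to unique ergodicity, and a uniquely invariant measure is an extreme point of the (singleton) set of invariant measures, hence ergodic.

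The main obstacle is conceptual rather than computational: because $\hat{\mathcal{S}}$ fails to be contractive, the classical IFSpdp existence/uniqueness theorem cannot be applied to it directly, so one must route everything through the contractive power $\hat{\mathcal{S}}^2$ and then pull the fixed point back using the semigroup relation $\mathcal{L}_{\hat{\mathcal{S}}^2}=\mathcal{L}_{\hat{\mathcal{S}}}^2$. The delicate point in that passage is that the weights of $\hat{\mathcal{S}}^2$ only obey $q_{ij}+r_{ij}<2$ rather than the original bound $c_i+d_i<1$ of hypothesis E2(b); consequently the invocation of the classical theorem must rest on the Dini continuity established in Remark~\ref{eventually contract 2 probabilities} together with positivity, not on E2(b) itself. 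The only remaining care is bookkeeping: keeping the composition order of $\hat{\phi}_j\circ\hat{\phi}_i$ matched to the product weight $p_i(x,y)\,p_j(\hat{\phi}_i(x,y))$ so that the identity $B_{\hat{\mathcal{S}}}^2=B_{\hat{\mathcal{S}}^2}$ is honest.
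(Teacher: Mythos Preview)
Your proposal is correct and follows essentially the same route as the paper: both pass to the contractive square $\hat{\mathcal{S}}^2$, invoke the classical IFSpdp theory there (the paper cites Barnsley--Demko--Elton--Geronimo~\cite{MR971099} and Kunze~\cite{MR3014680} rather than the degree-one case of~\cite{MR3180942}, but the content is the same), and transfer uniqueness back via the identity $B_{\hat{\mathcal{S}}}^2=B_{\hat{\mathcal{S}}^2}$. One small improvement in your version: the paper establishes only uniqueness in this proof and tacitly relies on the Schauder argument noted elsewhere for existence, whereas your observation that $\mathcal{L}_{\hat{\mathcal{S}}}(\alpha)$ is itself a fixed point of $\mathcal{L}_{\hat{\mathcal{S}}^2}$, hence equals $\alpha$, gives existence for free and makes the argument self-contained.
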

\begin{proof} The proof uses the fact that $\hat{\mathcal{S}}$ is eventually contractive. More precisely, we show in  Lemma~\ref{eventually contract 2} that $\hat{\mathcal{S}}^{2}$ is contractive and has an attractor $A(\hat{\mathcal{S}})$. From Barnsley~\cite{MR971099}, Theorem 2.1~\footnote{In \cite{MR971099}, they assume that the IFSpdp satisfy a Dini-type condition that is weaker than E2 for 2nd power extension. More precisely, the weights are average-contractive: $\sum p_{ij} \ln Lip(\hat{\phi}_{ij}) <0$.} or Kunze~\cite{MR3014680}, Theorem 2.60 and Theorem 2.63, there is a unique fixed point  $\alpha \in {\rm Prob}(X^2)$, for $\mathcal{L}_{\hat{\mathcal{S}}^{2}}$ and $\supp(\alpha)$ is the attractor of $\hat{\mathcal{S}}^{2}$ (that is equal to $A(\hat{\mathcal{S}})$ by Lemma~\ref{eventually contract 2}).

Let $\bar{\alpha}$ be any fixed point  of $\mathcal{L}_{\hat{\mathcal{S}}}$ that is, $\mathcal{L}_{\hat{\mathcal{S}}} \bar{\alpha}= \bar{\alpha}$.
One can show that
$$B_{\hat{\mathcal{S}}}^{2}(g) (x,y)= \sum_{i,j =0,1} p_{i}(x,y) p_{j}(\hat{\phi}_{i}(x,y)) g(\hat{\phi}_{i}(\hat{\phi}_{j}(x,y)))=B_{\hat{\mathcal{S}}^{2}}(g) (x,y),$$
so,
\begin{align*}
  \int_{X^2} g(x,y) \; d\bar{\alpha}(x,y)&= \int_{X^2} g(x,y) d\mathcal{L}_{\hat{\mathcal{S}}} \bar{\alpha}(x,y) = \int_{X^2} g(x,y) d\mathcal{L}_{\hat{\mathcal{S}}}^{2} \bar{\alpha}(x,y) \\
  &= \int_{X^2} B_{\hat{\mathcal{S}}}^{2}(g) (x,y) d\bar{\alpha}(x,y) = \int_{X^2} B_{\hat{\mathcal{S}}^{2}}(g) (x,y) d\bar{\alpha}(x,y) \\
  &= \int_{X^2} g(x,y) d\mathcal{L}_{\hat{\mathcal{S}}^{2}} \bar{\alpha}(x,y),
\end{align*}
that is, $\mathcal{L}_{\hat{\mathcal{S}}^{2}} \bar{\alpha}= \bar{\alpha}$, thus $\bar{\alpha}=\alpha$.
\end{proof}

\begin{definition} The unique fixed point of $\mathcal{L}_{\hat{\mathcal{S}}} $ in $Prob(X^2)$ is the \textbf{Extended Hutchinson measure} for $\mathcal{S}$.
\end{definition}

We can also to apply $B_{\hat{\mathcal{S}}}$ in functions $g(x,y)= f(\Pi_y (x,y))$, where $\Pi_y (x,y)=y$ is the projection on the second coordinate and $f\in C(X, \mathbb{R})$, obtaining
$\displaystyle B_{\hat{\mathcal{S}}}(g) (x,y)= \sum_{j=0,1} p_{j}(x,y) g(\hat{\phi}_{j}(x,y))= \sum_{j=0,1} p_{j}(x,y) f( \phi_{j}(x,y)) $ $= B_{\mathcal{S}}(f) (x,y)$
that is,  $B_{\hat{\mathcal{S}}}(f\circ\Pi_y) (x,y)=B_{\mathcal{S}}(f) (x,y),$  for any $f\in C(X, \mathbb{R})$.

\begin{lemma}\label{IdentcMargin} If $\mathcal{L}_{\hat{\mathcal{S}}} \alpha= \alpha$ then $(\Pi_x)_* \alpha = (\Pi_y)_* \alpha$~\footnote{The subscript ``*" means the push-forward map, $T_* \alpha$. For $T:X^2 \to X$, the push-forward map is defined by $\int_{X} f d T_* \alpha=\int_{X^2} f\circ T d\alpha$.}, that is, the marginals of the extended Hutchinson measure $\alpha$ are the same. The measure $\mu_{\alpha}=(\Pi_x)_* \alpha = (\Pi_y)_* \alpha$ is called \textbf{the projected Hutchinson measure}.
\end{lemma}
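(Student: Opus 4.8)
The plan is to test the fixed-point identity $\mathcal{L}_{\hat{\mathcal{S}}}\alpha=\alpha$ against functions on $X^2$ that depend on a single coordinate, exploiting the shift structure of the extension; the crucial structural fact is that $\Pi_x\circ\hat{\phi}_j=\Pi_y$ for \emph{both} branches $j=0,1$, since $\hat{\phi}_j(x,y)=(y,\phi_j(x,y))$ puts $y$ in the first slot irrespective of $j$.

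First I would fix an arbitrary $f\in C(X,\mathbb{R})$ and consider the test function $g=f\circ\Pi_x\in C(X^2,\mathbb{R})$, that is $g(x,y)=f(x)$. Applying the transference operator $B_{\hat{\mathcal{S}}}$ and using $\Pi_x(\hat{\phi}_j(x,y))=y$ for each $j$, together with the normalization $p_0(x,y)+p_1(x,y)=1$, the sum collapses to a single value:
$$B_{\hat{\mathcal{S}}}(f\circ\Pi_x)(x,y)=\sum_{j=0,1}p_j(x,y)\,f\big(\Pi_x(\hat{\phi}_j(x,y))\big)=f(y)\sum_{j=0,1}p_j(x,y)=f(y)=(f\circ\Pi_y)(x,y).$$
This is the analogue, for $f\circ\Pi_x$, of the identity $B_{\hat{\mathcal{S}}}(f\circ\Pi_y)=B_{\mathcal{S}}(f)$ recorded just before the statement, and it is where all the work of the proof sits.

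Next I would integrate this pointwise identity against $\alpha$ and feed it through the defining relation of the Markov operator, using $\mathcal{L}_{\hat{\mathcal{S}}}\alpha=\alpha$:
$$\int_{X^2}(f\circ\Pi_x)\,d\alpha=\int_{X^2}B_{\hat{\mathcal{S}}}(f\circ\Pi_x)\,d\alpha=\int_{X^2}(f\circ\Pi_y)\,d\alpha.$$
By the definition of the push-forward recalled in the footnote, this reads $\int_X f\,d(\Pi_x)_*\alpha=\int_X f\,d(\Pi_y)_*\alpha$. Since $f\in C(X,\mathbb{R})$ was arbitrary and regular Borel probabilities on the compact metric space $X$ are uniquely determined by their integrals against continuous functions (Riesz representation), I conclude $(\Pi_x)_*\alpha=(\Pi_y)_*\alpha$, so that $\mu_{\alpha}$ is unambiguously defined.

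I do not expect a genuine obstacle here: the entire argument reduces to the single observation that the first coordinate of $\hat{\phi}_j(x,y)$ equals $y$ independently of the chosen branch $j$ — precisely the ``memory-shift'' encoded in the extension of a GIFS — after which the normalization of the probabilities does the rest. The only step deserving a word of care is the closing appeal to Riesz representation, which requires the marginals to be regular Borel probabilities on a compact metric space; this is guaranteed by the standing convention on $Prob(X)$ and the compactness of $X$.
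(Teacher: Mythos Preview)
Your proof is correct and follows essentially the same route as the paper: both hinge on the computation $B_{\hat{\mathcal{S}}}(f\circ\Pi_x)(x,y)=f(y)$, which uses that the first slot of $\hat{\phi}_j(x,y)$ is $y$ regardless of $j$ together with $p_0+p_1=1$, and then feed this into the fixed-point identity $\int g\,d\alpha=\int B_{\hat{\mathcal{S}}}(g)\,d\alpha$ to conclude $\int f(x)\,d\alpha=\int f(y)\,d\alpha$ for all $f\in C(X,\mathbb{R})$. Your explicit appeal to the Riesz representation theorem to pass from equality of integrals to equality of measures is a worthwhile clarification that the paper leaves implicit.
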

\begin{proof}
If we take $g(x,y)= f(\Pi_x (x,y))$, where $\Pi_x (x,y)=x$ is the projection on the first coordinate and $f \in C(X, \mathbb{R})$ we have
$$B_{\hat{\mathcal{S}}}(g) (x,y)= \sum_{j=0,1} p_{j}(x,y) g(\hat{\phi}_{j}(x,y))=$$ $$=\sum_{j=0,1} p_{j}(x,y) (f\circ \Pi_x)(y, \phi_{j}(x,y))= f(y)\sum_{j=0,1} p_{j}(x,y)= f(y)$$
that is  $B_{\hat{\mathcal{S}}}(f\circ\Pi_x) (x,y)=f(y),$  for any $f\in C(X, \mathbb{R})$. Since $\mathcal{L}_{\hat{\mathcal{S}}}(\alpha)= \alpha$ we have
$$\int_{X^2 } f(y) d\alpha(x,y)= \int_{X^2 }  B_{\hat{\mathcal{S}}}(f\circ\Pi_x) (x,y) d\alpha(x,y)= $$ $$=\int_{X^2 }(f\circ\Pi_x) (x,y) d\mathcal{L}_{\hat{\mathcal{S}}}\alpha(x,y) = \int_{X^2 }f (x) d\alpha(x,y),$$
for any $f\in C(X, \mathbb{R})$.
\end{proof}

\subsection{The ergodic theorem for GIFS}
We address the problem of  using ergodic averages of the extended GIFS to estimate the integrals $\int_{X^2} g(x,y) d\alpha(x,y)$ where  $\alpha$ is the extended Hutchinson measure.

\begin{remark} \textbf{  The results will be stated for a general GIFS of degree $m$ with $n$ maps but the proofs will be made just for $m=2$ avoiding the extra indexes, because there is no difference at all, in the reasoning. In the rest of this section, the sequence  $x_i(w) = Z_{i, w} (x_0, x_1)$ for $i=0, 1, ...$ is obtained from  $(x_0, x_1)$ by the iteration by $\mathcal{S}$.}
\end{remark}

Our main tool is the Ergodic Lemma.
\begin{lemma} \label{extergtheorem} Let  $\alpha$ be the extended Hutchinson measure. For each $(x_0, x_1, ..., x_{m-1})\in X^m$ there exists a measurable set $\Omega_{(x_0,  x_1, ..., x_{m-1})}\subseteq \Omega=\{0, ..., n-1\}^{\mathbb{N}}$ such that $P_{(x_0, x_1, ..., x_{m-1})}(\Omega_{(x_0, x_1, ..., x_{m-1})})=1$ and  for any $w \in \Omega_{(x_0, x_1, ..., x_{m-1})}$
$$\frac{1}{N} \left(g(x_{ 0},... ,x_{m-1})+ ... +g(x_{mN-m},... , x_{mN-1})\right) \\ \to \int_{X^2} g(x) d\alpha(x),$$
$ \forall \; g \in C( X^m , \mathbb{R}).$
\end{lemma}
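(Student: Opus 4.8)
The plan is to recognize the block-average on the left-hand side as a Birkhoff (ergodic) average for the \emph{contractive} IFSpdp $\hat{\mathcal{S}}^2$ and then to invoke the classical Elton Ergodic Theorem for that system. Working with $m=2$ as agreed, the points that appear are exactly the $X^2$-components of the even iterates of the skew map: $\hat{\sigma}^{2k}(x_0,x_1,w)=(x_{2k},x_{2k+1},\sigma^{2k}w)$, and since $\hat{\phi}_{w_1}\circ\hat{\phi}_{w_0}$ sends $(x_0,x_1)$ to $(x_2,x_3)$, the pair $(x_{2k},x_{2k+1})$ is precisely the $k$-th iterate of $(x_0,x_1)$ under $\hat{\mathcal{S}}^2$ driven by the symbols read off $w$ in blocks of length two. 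Hence
$$\frac{1}{N}\sum_{k=0}^{N-1} g(x_{2k},x_{2k+1})$$
is literally the ergodic average of $g$ along a random orbit of $\hat{\mathcal{S}}^2$ started at $(x_0,x_1)$.

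First I would check that the driving randomness restricts correctly. The grouping map $w=(w_0,w_1,w_2,\dots)\mapsto\big((w_0,w_1),(w_2,w_3),\dots\big)$ is a bijection of $\Omega=\{0,1\}^{\mathbb N}$ onto the symbol space of $\hat{\mathcal{S}}^2$. Along the orbit, the product of the two $\hat{\mathcal{S}}$-weights over a length-two block equals the corresponding $\hat{\mathcal{S}}^2$-transition weight: using $\hat{\phi}_{w_0}(x_0,x_1)=(x_1,x_2)$ together with the cocycle identity $p_{ij}(x,y)=p_i(x,y)\,p_j(\hat{\phi}_i(x,y))$ from Remark~\ref{eventually contract 2 probabilities}, one gets $p_{w_0}(x_0,x_1)\,p_{w_1}(x_1,x_2)=p_{w_0 w_1}(x_0,x_1)$, and the analogous identity over longer blocks follows by telescoping. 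Consequently the pushforward of the Markov measure $P_{(x_0,x_1)}$ of Definition~\ref{HolLiftDef} under the grouping map is exactly the Markov measure of $\hat{\mathcal{S}}^2$ based at $(x_0,x_1)$, so every full-measure event for $\hat{\mathcal{S}}^2$ pulls back to a $P_{(x_0,x_1)}$-full-measure subset of $\Omega$.

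Next I would verify the hypotheses of Elton's theorem for $\hat{\mathcal{S}}^2$. By Lemma~\ref{eventually contract 2} the maps $\hat{\phi}_{ij}$ are genuine contractions with $Lip(\hat{\phi}_{ij})\le\lambda<1$, so the average-contraction condition $\sum p_{ij}\ln Lip(\hat{\phi}_{ij})<0$ holds trivially; by Remark~\ref{eventually contract 2 probabilities} the weights $p_{ij}$ are Lipschitz, hence Dini continuous, and by E2 they are bounded below away from $0$. These are exactly the standing assumptions of Elton~\cite{MR922361} (equivalently Barnsley et al.~\cite{MR971099}, Theorem~2.1, or Kunze~\cite{MR3014680}), and by Lemma~\ref{Power 2 IFSpdp fixed} the measure $\alpha$ is the unique $\mathcal{L}_{\hat{\mathcal{S}}^2}$-invariant probability. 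Elton's theorem then yields, for each starting point $(x_0,x_1)$ and each fixed $g\in C(X^2,\mathbb R)$, a full-measure set of $\hat{\mathcal{S}}^2$-orbits on which $\frac1N\sum_{k=0}^{N-1}g(x_{2k},x_{2k+1})\to\int_{X^2}g\,d\alpha$; pulling it back through the grouping bijection produces the set $\Omega_{(x_0,x_1)}$ with $P_{(x_0,x_1)}(\Omega_{(x_0,x_1)})=1$.

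The remaining point is to obtain a single null-complement that works \emph{simultaneously} for all $g$ rather than one $g$ at a time. Since $X^2$ is compact metric, $C(X^2,\mathbb R)$ is separable; I would fix a countable dense family $\{g_\ell\}$, intersect the corresponding countably many full-measure sets (still full measure), and extend the convergence to an arbitrary $g$ by the standard $\varepsilon/3$ estimate using $\|g-g_\ell\|_\infty$ small together with the uniform bound $\big|\frac1N\sum_k g(x_{2k},x_{2k+1})\big|\le\|g\|_\infty$. I expect the genuine obstacle to be the bookkeeping of the second paragraph: making the identification of the $\hat{\mathcal{S}}^2$-Markov measure with the pushforward of $P_{(x_0,x_1)}$ fully rigorous, in particular keeping straight the order in which maps compose versus the base points at which the weights are evaluated, since the contraction and regularity inputs are already packaged in the earlier lemmas. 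For general degree $m$ the argument runs verbatim with blocks of length $m$ and the contractive power $\hat{\mathcal{S}}^m$.
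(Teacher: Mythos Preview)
Your proposal is correct and follows essentially the same route as the paper: both reduce to the contractive IFSpdp $\hat{\mathcal{S}}^2$, identify $\Omega$ with the symbol space of $\hat{\mathcal{S}}^2$ via the block-grouping bijection (the paper's map $\Gamma$), check that the Markov measures correspond under this bijection, verify Elton's hypotheses via Lemma~\ref{eventually contract 2} and Remark~\ref{eventually contract 2 probabilities}, and then pull back the full-measure set. Your final separability argument is harmless but unnecessary, since Elton's theorem already delivers a single full-measure set that works simultaneously for all $g\in C(X^2,\mathbb{R})$.
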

\begin{proof} The proof is based on the Elton's ergodic theorem for IFSpdp (see  Elton~\cite{MR922361}).  In order to do that we need to use probabilities for $\hat{\mathcal{S}}^{2}$ because it is contractive.

Let $P_{(x,y)}^{2}$ be the measure in $\Omega^{2}=(\{0,1\}\times\{0,1\})^{\mathbb{N}}$ indexed by $(x=x_0, y=x_1)$~\footnote{ At this point, if the GIFS has degree $m$ we should to consider $P_{(x_0,...,x_{m-1})}^{m}$, the measure in $\Omega^{m}=(\{0,1\}\times \cdots \times \{0,1\})^{\mathbb{N}}$.}.  We built this measure  by considering his orbit $x_n(w)=Z_{n,w}(x,y)$  by the GIFS as a Markov process of higher order with place dependent probabilities
$(x_1=y,x_0=x),$ $ (x_2,x_3)=\hat{\phi}_{w_{1}w_{0}}(x_0 ,x_1),$   $(x_4,x_5)=\hat{\phi}_{w_{3}w_{2}}(x_2 ,x_3),  \cdots.$

We assign probabilities on each cylinder $\overline{(w_0,w_1) \cdots (w_{2n-2},w_{2n-1})} \subset \Omega^{2}$ (that generates the Borel sigma algebra of $\Omega^{2}$) as follows
$$P_{(x,y)}^{2}(\overline{(w_0,w_1) \cdots (w_{2n-2},w_{2n-1})})= p_{w_{0}w_{1}}(x_0 ,x_1 ) \cdots p_{w_{n-1}w_{n}}(x_{n-1}, x_{n}).$$

The  map $\Gamma :  \Omega^{2} \to \Omega$ given by
$\Gamma((w_0,w_1) \cdots (w_{2n-2},w_{2n-1}),...)= (w_0 \cdots w_{2n-1},...),$
is obviously a homeomorphism. Applying this map on cylinders with even lengths we get
$$\Gamma(\overline{(w_0,w_1) \cdots (w_{2n-2},w_{2n-1})})= \overline{w_0 \cdots w_{2n-1}},$$
and for cylinders with odd length we get
$$\Gamma^{-1}(\overline{w_0 \cdots w_{2n}})=  \overline{(w_0,w_1) \cdots (w_{2n},0)} \cup \overline{(w_0,w_1) \cdots (w_{2n},1)},$$
so $\Gamma$ is bi-measurable because the respective Borel  sigma algebras are generated by the respective pre images. For each cylinder  $\overline{w_0 \cdots w_{2n-1}} \in \Omega$ we have
$$ P_{(x,y)}^{2}(\overline{(w_0,w_1) \cdots (w_{2n-2},w_{2n-2})})= P_{(x,y)}(\overline{w_0 \cdots w_{2n-1}}),$$
in particular $\Gamma$ preserves measure.

From Lemma~\ref{Power 2 IFSpdp fixed} we get that $\alpha$ is ergodic with respect to the $\hat{\mathcal{S}}^{2}$. The Elton's ergodic theorem actually requires an average contraction hypothesis $$\displaystyle\prod_{ij}d(\hat{\phi}_{ij}(x,y), \hat{\phi}_{ij}(x',y') )^{p_{ij}(x,y)} < r \; d((x,y), (x',y') ),$$ for some $ r < 1$, which is a consequence of E1 and Lemma~\ref{eventually contract 2}, and that each $p_{ij}(x,y)$ to be positive and Dini continuous which is a consequence of E2 and Remark~\ref{eventually contract 2 probabilities}. Thus, we can use the Elton's ergodic theorem for $\hat{\mathcal{S}}^{2}$, that is, there exists a set  $\Omega_{(x_0, x_1)}^{2}\subseteq \Omega^{2}$ with probability one, such that for any $w \in \Omega_{(x_0, x_1)}^{2}$,
$$\frac{1}{N} \left( g(x_{ 0}(w), x_{ 1}(w))+ ... +g(x_{2N-2}(w), x_{2N-1}(w))\right) \to \int_{X^2} g(x,y) d\alpha,$$
$ \forall \; g \in C( X^2 , \mathbb{R}).$ We get our result if we take
$\Omega_{(x_0, x_1)}=\Gamma(\Omega_{(x_0, x_1)}^{2})\subseteq \Omega,$
because, $$P_{(x_0, x_1)}(\Omega_{(x_0, x_1)})= P_{(x_0, x_1)}(\Gamma(\Omega_{(x_0, x_1)}^{2}))=  P_{(x_0, x_1)}^{2}(\Omega_{(x_0, x_1)}^{2})=1.$$
\end{proof}

The next theorem generalizes the Elton's Ergodic Theorem for GIFS, providing a basis for applications and further studies in  Chaos Games~\cite{MR971099, MR977274, MR1045252, MR2254477} for GIFS. This theorem generalizes the analogous result for IFS, Barnsley~\cite{MR2254477}, Pg. 323, Theorem 4.5.

\begin{theorem}\label{EltErgTheoremGIFS} (Elton's Ergodic Theorem for GIFS) Let  $\alpha$ be the extended Hutchinson measure for $\mathcal{S}$. For each $(x_0, x_1, ..., x_{m-1})\in X^m$ there exists a measurable set $\Omega_{(x_0,  x_1, ..., x_{m-1})}\subseteq \Omega=\{0, ..., n-1\}^{\mathbb{N}}$ such that $P_{(x_0, x_1, ..., x_{m-1})}(\Omega_{(x_0, x_1, ..., x_{m-1})})=1$ and  for any $w \in \Omega_{(x_0, x_1, ..., x_{m-1})}$
$$\frac{1}{K} \left( f(x_{K-1}(w))+ ... +f(x_{1}(w)) +f( x_{0}(w))\right) \to \int_{X^2} f d\mu_{\alpha},$$
$ \forall \; f \in C( X , \mathbb{R})$, where $\mu_{\alpha}=(\Pi_{x_{0}})_* \alpha = (\Pi_{x_{1}})_* \alpha= \cdots = (\Pi_{x_{m-1}})_* \alpha$  is the projected Hutchinson measure.\\
\end{theorem}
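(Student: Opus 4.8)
The plan is to reduce the single-point ergodic average appearing in the theorem to the block-average already established in the Ergodic Lemma (Lemma~\ref{extergtheorem}), by observing that averaging a function of a single coordinate over the paired orbit isolates one residue class of indices. I will carry out the argument for $m=2$ as in the statement.

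First I would record the two projection identities. By the definition of push-forward together with Lemma~\ref{IdentcMargin}, for every $f \in C(X,\mathbb{R})$ one has
$$\int_{X^2} f(x)\, d\alpha(x,y) = \int_X f\, d\mu_{\alpha} = \int_{X^2} f(y)\, d\alpha(x,y).$$
Next, fix $(x_0, x_1)$ and let $\Omega_{(x_0,x_1)}$ be the full-measure set provided by Lemma~\ref{extergtheorem}; the key point is that this single set works simultaneously for \emph{every} $g \in C(X^2, \mathbb{R})$. Applying the lemma with $g = f \circ \Pi_x$ (so $g(x,y)=f(x)$), which selects the even-indexed orbit points, gives
$$\frac{1}{N}\sum_{k=0}^{N-1} f(x_{2k}(w)) \to \int_{X^2} f(x)\, d\alpha = \int_X f\, d\mu_{\alpha},$$
and applying it with $g = f \circ \Pi_y$ (so $g(x,y)=f(y)$), which selects the odd-indexed points, gives
$$\frac{1}{N}\sum_{k=0}^{N-1} f(x_{2k+1}(w)) \to \int_{X^2} f(y)\, d\alpha = \int_X f\, d\mu_{\alpha},$$
for every $w \in \Omega_{(x_0,x_1)}$.

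Then I would combine the two subsequences. For $K = 2N$ the full average $\frac{1}{K}\sum_{n=0}^{K-1} f(x_n(w))$ is exactly the arithmetic mean of the two block-averages above, so it converges to $\int_X f\, d\mu_{\alpha}$. For odd $K = 2N+1$ I would add the single leftover term $f(x_{2N}(w))/K$, which is bounded by $\|f\|_{\infty}/K \to 0$ since $X$ is compact and $f$ continuous; hence the convergence persists. This yields the claimed limit along the full sequence $K \to \infty$ on the same set $\Omega_{(x_0,x_1)}$, whose measure is $1$ by Lemma~\ref{extergtheorem}.

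The main obstacle is really the only conceptual point: recognizing that the block-structured average of Lemma~\ref{extergtheorem}, which sums over the non-overlapping pairs $(x_{2k}, x_{2k+1})$, decomposes the single-point average into its even and odd arithmetic progressions, and that the equality of the two marginals (Lemma~\ref{IdentcMargin}) forces both progressions to share the common limit $\int_X f\, d\mu_{\alpha}$. For general degree $m$ the identical argument applies verbatim: one uses $g = f \circ \Pi_{x_j}$ for each $j = 0, \ldots, m-1$ to peel off the residue class $j \bmod m$, invokes the $m$-fold marginal identity $\mu_{\alpha}=(\Pi_{x_0})_*\alpha = \cdots = (\Pi_{x_{m-1}})_*\alpha$ so that all $m$ progressions share the same limit, and absorbs the at most $m$ leftover boundary terms into an $O(m/K)$ error.
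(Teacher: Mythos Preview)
Your proposal is correct and follows essentially the same approach as the paper's own proof: apply Lemma~\ref{extergtheorem} to $g=f\circ\Pi_x$ and $g=f\circ\Pi_y$ to isolate the even- and odd-indexed subsequences, invoke the marginal identity of Lemma~\ref{IdentcMargin} so both limits equal $\int_X f\,d\mu_\alpha$, average them for $K=2N$, and absorb the single leftover term for odd $K$ using $\|f\|_\infty/K\to 0$. Your write-up is in fact slightly cleaner than the original, which contains a few typographical slips mixing $\Pi_x$ and $\Pi_y$ in the displayed formulas.
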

\begin{proof} Again, we use $m=2$ to simplify the writing.
Consider the projections $\Pi_x (x,y)=x$ and $\Pi_y (x,y)=y$. If we apply the Lemma~\ref{extergtheorem} to $g(x,y)= f(\Pi_x (x,y))$ and  $g(x,y)= f(\Pi_y (x,y))$ we get
$$\frac{1}{N} \left( (f\circ\Pi_y)(x_{2N-2}, x_{2N-1})+ ... +(f\circ\Pi_x)(x_{0}, x_{1})\right) \to \int_{X^2} (f\circ\Pi_y)(x,y) d\alpha,$$
$$\frac{1}{N} \left( f(x_{2N-1})+ ... +f(x_{3}) +f(x_{1})\right) \to \int_{X^2} f(y) d\alpha,$$
and
$$\frac{1}{N} \left( (f\circ\Pi_x)(x_{2N-2}, x_{2N-1})+ ... +(f\circ\Pi_y)(x_{0}, x_{1})\right) \to \int_{X^2} (f\circ\Pi_y)(x,y) d\alpha,$$
$$\frac{1}{N} \left( f(x_{2N-2})+ ...+f( x_{2}) +f( x_{0})\right) \to \int_{X^2} f(x) d\alpha.$$
Adding this two limits we get
$$\frac{2}{2N} \left( f(x_{2N-1})+ f(x_{2N-2})+ ...+f( x_{1}) +f( x_{0})\right) \to \int_{X^2}  f(x)+f(y) d\alpha,$$
thus
$$\frac{1}{K} \left( f(x_{K})+ ... +f(x_{1}) +f( x_{0})\right) \to \frac{1}{2}\int_{X^2} f(x)+f(y) d\alpha(x,y),$$
if $K=2N$. On the other hand, if $K=2N+1$ we use the fact that $\frac{\|f\|_{\infty}}{K}$ goes to zero uniformly, what guarantees the result for any $K$. Since $(\Pi_x)_* \alpha = \mu_{\alpha} = (\Pi_y)_* \alpha$, the limit above will be
$$\frac{1}{K} \left( f(x_{K-1}(w))+ ... +f(x_{1}(w)) +f( x_{0}(w))\right) \to \int_{X} f(x) d\mu_{\alpha}(x).$$
\end{proof}

\begin{remark}
A natural question is if  $\alpha=\mu \times \mu$. The answer is not in general, but it can hapens in some cases. One can easily test this possibility using functions $g(x,y)=a(x)b(y)$. Indeed, even for a standard GIFSpdp in $X=[0, 1]$, $\phi_0(x,y)= \frac{1}{4}x + \frac{1}{4}y \text{ and } \phi_1(x,y)= \frac{1}{4}x + \frac{1}{4}y + \frac{1}{2}$  and $p_j (x,y)=\frac{1}{2}, \; j=0,1$ we have, for $a(x)=x$ and a generic $b$, an impossible equation appears, if we suppose $\alpha=\mu \times \mu$.  In the Example~\ref{GIFSattrac2xmod1} of the Section~\ref{An example from Thermodynamic Formalism} we found $\alpha=\mu_{\mathcal{S}} \times \mu_{\mathcal{S}}$.
\end{remark}

\section{Applications and Examples}\label{applications}
In this section we use the ergodic theorem to get several consequences and applications to related fields.

\subsection{Chaos Game: a random iteration algorithm for GIFS} In the 80's, M. F. Barnsley \cite{MR977274, MR2254477} has introduced the idea of Chaos Game or Random Iteration Algorithm as a tool for drawing fractals appearing as attractors of IFS. However, in the last few years, the term Chaos Game means every iteration (sets, multifunction, point) using a random choice of maps. The algorithm is described as follows (see Barnsley~\cite{MR977274}, Chapter III, or Kunze~\cite{MR3014680}, Chapters 2 and 6 for more details):\\

\emph{Let $\mathcal{R}$ be an attractive IFS  on $X$, that  is a family of functions $\phi_j: X \to X$, and we introduce weight functions (probabilities) $p_j: X \to [0,1]$ such that  $p_0(x) +  p_{1}(x) =1$ producing an IFSpdp, denoted $\mathcal{R}=(X, (\phi_j)_{j=0,1}, (p_j)_{j=0,1})$.  If $A$ is the attractor of $\mathcal{S}$ then, for each $x_0 \in X$
$$A=\lim_{k\to \infty } \overline{\{x_n (a)\}_{n\geq k}},$$
for almost every $a\in \Omega=\{0,1\}^{\mathbb{N}}$ with respect to $P_{x_0}$~\footnote{The limit is taken with respect to the Hausdorff metric.}. Where the orbit $x_n (a)$ is obtained from $x_0$ taking $x_{n+1}=\phi_{a_{n}}(x_{n})$ and $a_{n}$ is chosen with probability ${\rm Prob}(a_n=j)=p_{j}(x_{n}).$}\\

We will show that the chaos game, in the sense of approximate the picture of the attractor by the closure of a random orbit, works for a extended GIFS. However, as we can see from Example~\ref{proj does not}, that satisfies the hypothesis of Theorem~\ref{chaosgametheorem}, we are not able to draw the attractor of GIFS by projecting the attractor of his extension.

\begin{theorem}\label{chaosgametheorem}(Chaos game for extended GIFS) Let $\mathcal{S}$ be a GIFS satisfying E1 and E2 hypothesis. If $A(\hat{\mathcal{S}})$ is the attractor of his extension then, for any fixed $x_0, x_1,... x_{m-1} \in X$ we have
$$A(\hat{\mathcal{S}})=\lim_{k\to \infty } \overline{\{(x_n (a),..., x_{n+m-1} (a)) \}_{n\geq k}},$$
for almost all $a \in \Omega$ with respect to the probability $P_{x_0,x_1,... x_{m-1}}$, where $\{x_n (a)\}_{n}$ is the orbit of $(x_0, x_1,... x_{m-1})$. In particular $\displaystyle\mathrm{proj}_{x} A(\hat{\mathcal{S}})=\lim_{k\to \infty } \overline{\{x_n (a)\}_{n\geq k}}$.
\end{theorem}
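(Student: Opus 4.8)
The plan is to reduce the statement to a Hausdorff-metric convergence for the orbit of the contractive extension $\hat{\mathcal{S}}$ and then establish the two Hausdorff inclusions separately. Writing $z_n := (x_n(a), x_{n+1}(a))$ for the orbit of $\hat{\mathcal{S}}$ in $X^2$ (the case $m=2$), the claim is exactly that $\overline{\{z_n\}_{n\geq k}} \to A(\hat{\mathcal{S}})$ in the Hausdorff metric, and the final ``in particular'' follows by applying the continuous map $\mathrm{proj}_x$, which commutes with Hausdorff limits of compact sets and sends $z_n$ to $x_n(a)$.

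First I would show that the whole tail of the orbit is uniformly close to $A(\hat{\mathcal{S}})$, i.e. $\sup_{n\geq k} d(z_n, A(\hat{\mathcal{S}})) \to 0$. By Lemma~\ref{eventually contract 2} the second-power maps $\hat{\phi}_{ij}$ are contractions with ratio at most $\lambda<1$, and since $F_{\hat{\mathcal{S}}}(A(\hat{\mathcal{S}}))=A(\hat{\mathcal{S}})$ the compact set $A(\hat{\mathcal{S}})$ is forward invariant under every $\hat{\phi}_j$. Hence for any $p\in A(\hat{\mathcal{S}})$,
$$d(z_{n+2}, A(\hat{\mathcal{S}})) \leq d(\hat{\phi}_{a_{n+1}a_n}(z_n), \hat{\phi}_{a_{n+1}a_n}(p)) \leq \lambda\, d(z_n, p),$$
and taking the infimum over $p$ gives $d(z_{n+2}, A(\hat{\mathcal{S}})) \leq \lambda\, d(z_n, A(\hat{\mathcal{S}}))$. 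Iterating on even and odd indices separately shows $d(z_n, A(\hat{\mathcal{S}}))$ decays geometrically, which yields the desired uniform bound on the tail. This half needs only E1.

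The harder half is showing that $A(\hat{\mathcal{S}})$ is contained in $\overline{\{z_n\}_{n\geq k}}$ for every $k$, and this is where the ergodic lemma and the probabilities enter. By Lemma~\ref{Power 2 IFSpdp fixed} we have $\supp(\alpha)=A(\hat{\mathcal{S}})$, so for every $p\in A(\hat{\mathcal{S}})$ and every $\varepsilon>0$ the ball $B(p,\varepsilon)$ satisfies $\alpha(B(p,\varepsilon))>0$. Choosing a continuous bump $0\leq g\leq 1$ with $g(p)=1$ and $\supp(g)\subseteq B(p,\varepsilon)$ gives $\int g\, d\alpha>0$, and Lemma~\ref{extergtheorem} then forces $\frac1N\sum_{n=0}^{N-1} g(z_{2n}) \to \int g\, d\alpha>0$ for $P_{(x_0,x_1)}$-almost every $a$; hence $z_{2n}\in B(p,\varepsilon)$ for infinitely many $n$. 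To turn this pointwise-in-$p$ statement into the Hausdorff inclusion I would fix, for each $\varepsilon = 1/\ell$, a finite cover of the compact set $A(\hat{\mathcal{S}})$ by balls $B(p_1,\varepsilon),\dots,B(p_M,\varepsilon)$ centered in $A(\hat{\mathcal{S}})$, intersect the finitely many full-measure sets on which each center-ball is visited infinitely often, and intersect further over $\ell\in\mathbb{N}$ to produce a single $P_{(x_0,x_1)}$-full-measure set on which every tail $\{z_n\}_{n\geq k}$ is $\varepsilon$-dense in $A(\hat{\mathcal{S}})$ for all $\varepsilon$, i.e. $A(\hat{\mathcal{S}})\subseteq \overline{\{z_n\}_{n\geq k}}$ for all $k$. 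The main obstacle is precisely this bookkeeping: one must manage the exceptional null sets uniformly across the countable family of test functions and covers, so that a single almost-sure event supports the conclusion for all $p$ and all $\varepsilon$ simultaneously.

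Combining the two halves gives $d_H(\overline{\{z_n\}_{n\geq k}}, A(\hat{\mathcal{S}})) = \sup_{n\geq k} d(z_n, A(\hat{\mathcal{S}})) \to 0$, which is the first displayed limit. Applying $\mathrm{proj}_x$ and using that it is $1$-Lipschitz for $d_\infty$ (so it carries Hausdorff-convergent sequences of compacta to Hausdorff-convergent images, with $\mathrm{proj}_x\overline{\{z_n\}_{n\geq k}}=\overline{\{x_n(a)\}_{n\geq k}}$) yields $\mathrm{proj}_x A(\hat{\mathcal{S}}) = \lim_k \overline{\{x_n(a)\}_{n\geq k}}$, completing the proof. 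For general degree $m$ the only change is to replace pairs by $m$-tuples and $\hat{\mathcal{S}}^2$ by $\hat{\mathcal{S}}^m$, drawing the ergodic input from Lemma~\ref{extergtheorem} in its stated $m$-fold form.
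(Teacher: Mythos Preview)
Your proof is correct and follows the same overall architecture as the paper's: both obtain the inclusion $A(\hat{\mathcal{S}})\subseteq \overline{\{z_n\}_{n\ge k}}$ from the ergodic Lemma~\ref{extergtheorem} together with $\supp(\alpha)=A(\hat{\mathcal{S}})$, and both finish the projected statement by pushing the Hausdorff limit through the continuous projection. Your countable-cover bookkeeping for a single full-measure set is more explicit than the paper, which simply relies on the fact that Lemma~\ref{extergtheorem} already furnishes one full-measure set valid for \emph{all} continuous $g$.

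The one genuine difference is in the reverse inclusion. You prove the deterministic estimate $d(z_{n+2},A(\hat{\mathcal{S}}))\le \lambda\,d(z_n,A(\hat{\mathcal{S}}))$ directly from the contraction of $\hat{\mathcal{S}}^2$ and forward invariance of $A(\hat{\mathcal{S}})$, giving geometric decay for \emph{every} sequence $a$. The paper instead argues indirectly: if the initial pair lies in $A(\hat{\mathcal{S}})$ the tail stays inside by forward invariance, and then it invokes that the tail limit is independent of the initial point. Your route is more elementary and quantitative; the paper's route is shorter to state but implicitly uses the same contraction to justify initial-point independence.
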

\begin{proof}
We assume $m=2$. To prove that $A(\hat{\mathcal{S}})=\lim_{k\to \infty } \overline{\{(x_n (a), x_{n+1} (a)) \}_{n\geq k}},$ we take $z \in A(\hat{\mathcal{S}})=\supp \alpha$ then there is a neighborhood $V$ of $z$ such that $\alpha(V)>0$. By Lemma~\ref{extergtheorem} there exists a measurable set $\Omega_{(x_0, x_1)}\subseteq \Omega$ such that $P_{(x_0, x_1)}(\Omega_{(x_0, x_1)})=1$ and for any $a \in \Omega_{(x_0, x_1)}$,
$\frac{1}{N} \sharp\left\{ _{ 0\leq j\leq N-1} \; | \; (x_{j}(a),x_{j+1}(a))   \in V \right\} \to \alpha(V).$ Reducing the size of $V$ we get that $z \in \lim_{k\to \infty } \overline{\{(x_{j}(a),x_{j+1}(a))\}_{n\geq k}}$ thus $A(\hat{\mathcal{S}}) \subseteq \lim_{k\to \infty } \overline{\{(x_{j}(a),x_{j+1}(a))\}_{n\geq k}}$. On the other hand, $A(\hat{\mathcal{S}})$ is forward invariant. If we have  $(x_0, x_1) \in A(\hat{\mathcal{S}})$ then $A(\hat{\mathcal{S}}) \supseteq \lim_{k\to \infty } \overline{\{(x_{j}(a),x_{j+1}(a))\}_{n\geq k}}$, what proves the equality because the closure of the orbit does not depends on the first point, only on the sequence $a \in \Omega$. The second part $\mathrm{proj}_{x} A(\hat{\mathcal{S}})=\lim_{k\to \infty } \overline{\{x_n (a)\}_{n\geq k}}$ follows in the same fashion using Theorem~\ref{EltErgTheoremGIFS}, or we can use the first part of this proof and the fact the projection is continuous.
\end{proof}

\subsection{Nonautonomous Dynamical Systems}

A well known example of Discrete Nonautonomous Dynamical System (see P\"otzsche \cite{MR2680867}) is the dynamics generated by a  finite difference equation(FDE), defined by a nonautonomous recurrence relation of order $m \geq 2$, nominally
$$x_{j+m}=f(x_{j+m-1}, x_{j+m-2}, ..., x_{j}, a_{j}), \; j\geq 0,$$
where $a_0, a_1, ... \in I$. The control sequence $(a_j)_{j=0,1, ...}$ represents some seasonal interference acting on each iteration by changing the standard  recursiveness on a FDE. It can be modeled by a GIFS $\mathcal{S}=(X^m, (\phi_j)_{j=0...n-1})$ where $\phi_{j}: X^m \to X$ is given by
$$\phi_{j}(y_{0}, y_{1}, ..., y_{m-1})=f(y_{0}, y_{1}, ..., y_{m-1}, a_{j}).$$
The control set $I$ could be finite or not. In this case, the orbit of the GIFS from the point  $(c_{0}, ..., c_{m-2}, c_{ m-1})$ is equal to the orbit of the FDE with initial conditions:
$$
\left\{
  \begin{array}{ll}
    x_{j+m} &=f(x_{j+m-1}, x_{j+m-2}, ..., x_{j}, a_{j}), a \in I^{\mathbb{N}}\\
    x_{0} & =c_0 \\
    \cdots & = \cdots \\
    x_{m-1}, & =c_{m-1}
  \end{array}
\right.
$$
If the associated GIFS satisfy the hypothesis E1 and E2, we can apply our theory to study the asymptotic behavior and the limit sets of these nonautonomous dynamical systems.

\begin{example} A bad example is $X=\mathbb{R}$, noncompact, and the not contractive GIFS  given by
$\phi_{-1}(x,y)= x - y \text{ and } \phi_{1}(x,y)=  x + y $ or $\phi_j (x,y)= x + j y $ associated to the FDE
$$
\left\{
  \begin{array}{ll}
    x_{j+2} &=f(x_{j+1}, x_{j}, a_{j})=x_{j+1}+ a_{j} x_{j} \\
    x_{0}, & =c_{0} \\
    x_{1}, & =c_{1}
  \end{array}
\right.
$$
Here,$I=\{-1,1\}$ and $\Omega=\{-1,1\}^{\mathbb{N}}$. In this case the theory does not work because there is no global attractor. However $A=\{(0,0)\}$ is an invariant set.
\end{example}

\begin{example}
A good example is the second order FDE in $X=[0,1]$ given by
$$
\left\{
  \begin{array}{ll}
    x_{j+2} &=\frac{1}{4} x_{j+1}+ \frac{1}{4} x_{j}+ \frac{a_j}{2},  a_j =j\in \{0,1\}\\
    x_{0}, & =c_{0} \in [0,1]\\
    x_{1}, & =c_{1} \in [0,1]
  \end{array}
\right.
$$
Associated to this FDE we have $\mathcal{S}=(X,  (\phi_j)_{j=0,1})$ an GIFS where $\phi_j (x,y)= \frac{1}{4}x + \frac{1}{4}y + \frac{j}{2}$.  Both functions $\phi_j $ are $Lip_{\frac{1}{4},\frac{1}{4}}([0,1]^2, [0,1])$  (this example appears in R. Miculescu~\cite{MR3180942}).

The typical limit set of the orbits $(x_n, x_{n+1})$, of this FDE is the attractor of the corresponding extended GIFS. In this case we can show explicitly a formula for those points:
$$x_{n+2}=\frac{b_{n+1}}{4^{n+1}}\,x_{{0}}+\frac{b_{n}}{4^{n+1}}\,x_{1}+ \frac{b_{n}}{
4^{n} \, 2} \, a_{{0}}+ \cdots +\frac{b_{0}}{ 4^{0}\,2}a_{n}, \; n\geq 0,$$
where $x_{0} = Z_{0, a} (x_0, x_1)$, $x_{1}= Z_{1, a} (x_0, x_1) $ and  $x_{j} = Z_{j, a} (x_0, x_1)$ satisfies $x_{j+1} =  \phi_{a_{j-1}}(x_{j-1}, x_{j}), \; j \geq 1.$ And, the numbers $b_{{0}}, b_{{1}}, ... $ are given by the generating function $G(w)={\frac{1}{1-w-4\,{w}^{2}}}=1+w+5\,{w}^{2}+9\,{w}^{3}+29\,{w}^{4}+65\,{w}^{5}+181\,{w}^{6}+441\,{
w}^{7}+1165\,{w}^{8}+O \left( {w}^{9} \right).$  Solving $1-w-4\,{w}^{2}=0$ with respect  to $z$ we get
$b_n=\frac{1}{\sqrt{17}}\left[\lambda_2 ^{-n-1} - \lambda_1 ^{-n-1}\right], \; n \geq 2.$
Finally,  the coefficients of $x_{{0}}$ and $x_{{1}}$ are of the type
$$\frac {b_{n-1}}{2^{2n+2}}= \frac {1}{2^{2n+2}\sqrt{17}} \left[\lambda_2 ^{-(n-1)-1} - \lambda_1 ^{-(n-1)-1}\right]= \frac{1}{4\sqrt{17}}\left[  \frac {\lambda_2^{-1}}{(4\lambda_2) ^{n }}  -    \frac {\lambda_1^{-1}}{(4\lambda_1) ^{n }}\right]  \to 0,$$
because $\|4\lambda_i\|>1$, $i=1,2$$(4\lambda_1 \approx -2.56...$ and $4\lambda_2 \approx 1.56...)$.

By the Chaos Game Theorem, the closure in $X$ of the orbit defined by $a=(a_0, a_1, ...)$ is $\mathrm{proj}_{x} A(\hat{\mathcal{S}})$:
$$\displaystyle\mathrm{proj}_{x} A(\hat{\mathcal{S}})= \lim_{k \to \infty}{\rm cl}\left( \bigcup_{n\geq k} \ x_{n}\right)=\lim_{k \to \infty}{\rm cl}\left( \bigcup_{n\geq k} \frac{1}{2} \sum_{j=0}^{n-2} {\frac {b_{j}}{
4^{j}}} a_{n-2-j}\right),$$ for a random $ a \in \Omega$. The  picture of $A(\hat{\mathcal{S}})$ is given  by the Figure~\ref{attracstdifs} and is drawn using the Chaos game theorem.
\begin{figure}[h!]
  \centering
  \includegraphics[width=6.5cm]{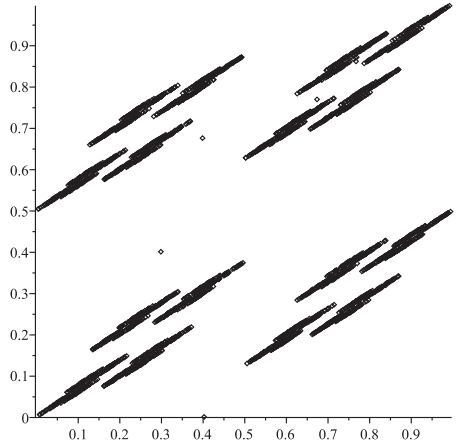}\\
  \caption{$\supp \alpha = A(\hat{\mathcal{S}})$, the attractor of $\hat{\mathcal{S}}$.
The picture of the attractor of extension is  obtained by 4000 iterations of a random orbit.}\label{attracstdifs}
\end{figure}

Employing the ergodic theorem we can also estimate the frequency of visitations of the solutions of the FDE on a subset of the phase space. We choose an initial distribution $p_0(x,y)$ and $p_1(x,y)$ satisfying hypothesis E2 (such as, $p_0(x,y)=\frac{1}{3}$ and $p_1(x,y)=\frac{2}{3}$). From Theorem~\ref{EltErgTheoremGIFS}, if $\mu_{\alpha}$ is the Hutchinson measure for $\mathcal{S}$ and $B \subseteq [0,1]$ there exists a measurable set $\Omega_{(c_0, c_1)}\subseteq \Omega$ such that $P_{(c_0, c_1)}(\Omega_{(c_0, c_1)})=1$ and  for any $a \in \Omega_{(c_0, c_1)}$
$$\frac{1}{N} \sharp\left\{ _{ 0\leq j\leq N-1} \; | \; x_{j}(a) \in B \right\} \to \mu_{\alpha}(B).$$
That is the typical average of visits of the orbits on the set $B$, with respect to probabilities that we choose.
\end{example}

\subsection{An example from Thermodynamic Formalism} \label{An example from Thermodynamic Formalism}
In Thermodynamic Formalism (see  Bowen~\cite{MR2423393} or Walters~\cite{MR0412389} for more details), the Ruelle theorem gives Gibbs measures for a potential with respect to a map.

\begin{theorem}\label{thermo form} (Ruelle)  Given, $(X,d)$ a compact and connected metric space, $T:X \to X$ an expanding map $n$ to $1$, $A:X \to \mathbb{R}$ a H\"older potential, there exists a positive number $\lambda$, a function $h$ and a measure $\nu$ such that $P_{A} h= \lambda h \text{ and } P_{A}^* \nu= \lambda \nu,$
where, $P_{A} (f)(x)= \sum_{Ty=x} e^{A(y)} f(y)$ is the Ruelle operator and his dual $P_{A}^*$, act in probabilities. Moreover, $\mu=h \nu$ called the Gibbs measure, is $T$-invariant. In particular the Gibbs property implies that  $\mu$ positive in open sets.
\end{theorem}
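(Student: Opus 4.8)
The plan is to follow the classical three-stage construction of the Ruelle--Perron--Frobenius data $(\lambda, h, \nu)$: first produce the eigenmeasure $\nu$ for the dual operator by a fixed-point argument, then build the eigenfunction $h$ by an equicontinuity (bounded distortion) argument, and finally assemble $\mu = h\nu$ and check $T$-invariance together with positivity. The deduction of invariance is purely algebraic once the eigen-data exist; the real work is the construction of $h$.

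First I would construct $\nu$. The map $\Phi(\nu)=P_{A}^{*}\nu / (P_{A}^{*}\nu)(X)$ is a continuous self-map of the set $Prob(X)$, which is convex and weak-$*$ compact since $X$ is compact. By the Schauder--Tychonoff fixed point theorem $\Phi$ has a fixed point, which we call $\nu$; setting $\lambda=(P_{A}^{*}\nu)(X)=\int_{X} P_{A}(1)\,d\nu>0$ (positive because $A$ is bounded and $T$ is $n$-to-$1$) gives the dual eigenvalue equation $P_{A}^{*}\nu=\lambda\nu$.

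Next comes the main step, the construction of the eigenfunction. I would pass to the normalized operator $\mathcal{L}=\lambda^{-1}P_{A}$, so that $\mathcal{L}^{*}\nu=\nu$. Using that $T$ is uniformly expanding (inverse branches contract distances by a definite factor $0<\kappa<1$) and that $A$ is $\beta$-H\"older, one telescopes the Birkhoff sums $S_{k}A$ along corresponding inverse branches and sums the resulting geometric series to obtain a bounded distortion estimate: there is a constant $C$ with $|\,\log \mathcal{L}^{k}(1)(x)-\log \mathcal{L}^{k}(1)(x')\,|\le C\,d(x,x')^{\beta}$, uniformly in $k$, for $x,x'$ close. This shows the family $\{\mathcal{L}^{k}(1)\}_{k}$ is uniformly bounded above and below away from $0$ and equicontinuous, so by Arzel\`a--Ascoli the Ces\`aro averages $\frac{1}{N}\sum_{k=0}^{N-1}\mathcal{L}^{k}(1)$ admit a uniformly convergent subsequence. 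Its limit $h$ is continuous and strictly positive, satisfies $\mathcal{L}h=h$, i.e.\ $P_{A}h=\lambda h$, and is normalized by $\int_{X}h\,d\nu=1$. This distortion/equicontinuity estimate, where the H\"older regularity of $A$ and the expansion of $T$ are genuinely used, is the part I expect to be the main obstacle.

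Finally I would set $\mu=h\nu$ and verify the remaining claims. The key identity is $P_{A}\big((f\circ T)\,g\big)=f\,P_{A}(g)$ for all $f,g\in C(X,\mathbb{R})$, which is immediate from the definition of $P_{A}$ since $f(Ty)=f(x)$ whenever $Ty=x$. Applying it with $g=h$ and using the two eigenvalue equations,
\begin{align*}
\int_{X} f\circ T \, d\mu &= \int_{X} (f\circ T)\,h \, d\nu = \frac{1}{\lambda}\int_{X} P_{A}\big((f\circ T)\,h\big)\,d\nu \\
&= \frac{1}{\lambda}\int_{X} f\,(P_{A} h)\,d\nu = \int_{X} f\,h\,d\nu = \int_{X} f\,d\mu,
\end{align*}
so $\mu$ is $T$-invariant. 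Positivity of $\mu$ on open sets follows because $h$ is bounded below by a positive constant, while the expansion of $T$ together with the distortion bound forces $\nu$ to charge every open set (finitely many iterates of $T$ map any ball onto all of $X$, so $\nu$ of a ball is bounded below via the Gibbs estimate). This gives the Gibbs property and completes the argument.
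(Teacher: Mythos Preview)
The paper does not prove this theorem at all: it is quoted as the classical Ruelle theorem, with references to Bowen~\cite{MR2423393} and Walters~\cite{MR0412389}, and is used only as background to motivate Example~\ref{GIFSattrac2xmod1}. So there is no ``paper's own proof'' to compare your proposal against.

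That said, your outline is the standard Ruelle--Perron--Frobenius argument as presented in those references: Schauder--Tychonoff for the eigenmeasure $\nu$, bounded distortion plus Arzel\`a--Ascoli for the eigenfunction $h$, and the pull-through identity $P_A((f\circ T)g)=f\,P_A(g)$ for $T$-invariance of $\mu=h\nu$. The sketch is correct in spirit; the one place where you are slightly informal is the positivity-on-open-sets step, where you would need to make precise that connectedness of $X$ plus expansion ensures the inverse-branch images shrink to points and cover, so that the Gibbs comparison $\nu(\text{cylinder}) \asymp \lambda^{-k}e^{S_kA}$ actually yields a lower bound on $\nu(U)$ for every open $U$. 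But this is exactly the argument in the cited sources, so your proposal matches what the paper defers to.
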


Usually, such dynamical systems are identified as a uniform IFSpdp $$\mathcal{R}=(X, (\tau_{i}(x))_{i=1,2,..,n}, (p_{i}(x)=e^{A(\tau_{i}(x))})_{i=1,2,..,n})$$ where the inverse branches of $T$ are the contractive maps $\tau_{i}$, that is,  $T\circ\tau_{i}(x)=id(x), \; i=1,2,..,n$. In Jorgensen~\cite{MR2240643} this identification is called  \emph{the endomorphism case}. If the potential is normalized, that is, $\lambda=1$ then $h=1$ and the Gibbs measure $\mu=\mu_{\mathcal{R}}$ is the Hutchinson measure of the IFSpdp because
$$ P_{A} (f)(x)= \sum_{Ty=x} e^{A(y)} f(y)= \sum_{i=1..n} p_{i}(x) f(\tau_{i}(x)),$$
is the fractal operator associated to $\mathcal{R}$.

The easier case where this happens is for $X=[0,1]$, $T(x)=2x\mod 1$ and $A(x)= \ln \frac{1}{T'(x)}=-\ln 2$. In this case the inverse branches are $\tau_{i}(x)= \frac{1}{2} x + \frac{i}{2}, \; i=0,1$ and $p_{i}(x)=e^{-\ln 2}=\frac{1}{2}$. It is well know, that the Gibbs measure is the Lebesgue measure $dx$ in the interval (see Conze~\cite{MR1078079} for a detailed study of the operator $ P_{A}$ in this case).

The next example shows that $A(\hat{\mathcal{S}})=A(\mathcal{S})^2$ can happen. We consider a GIFS that ``duplicates"  the behavior of the IFS $\mathcal{R}$ built from $T(x)=2x\mod 1$.

\begin{example} \label{GIFSattrac2xmod1} The GIFSpdp $\mathcal{S}$ in $[0,1]$ given by $\phi_i (x,y)= \frac{1}{2} x + \frac{i}{2}, \; i=0,1$ and $p_i (x,y)= \frac{1}{2}, \; i=0,1$, satisfy:\\
a) $\mu_{\mathcal{S}}=dx$ and $A(\mathcal{S})=[0,1]$;\\
b) $\alpha=dx\,dy$ and $A(\hat{\mathcal{S}})=[0,1]^2$.
\begin{figure}[h!]
  \centering
  \includegraphics[width=5cm]{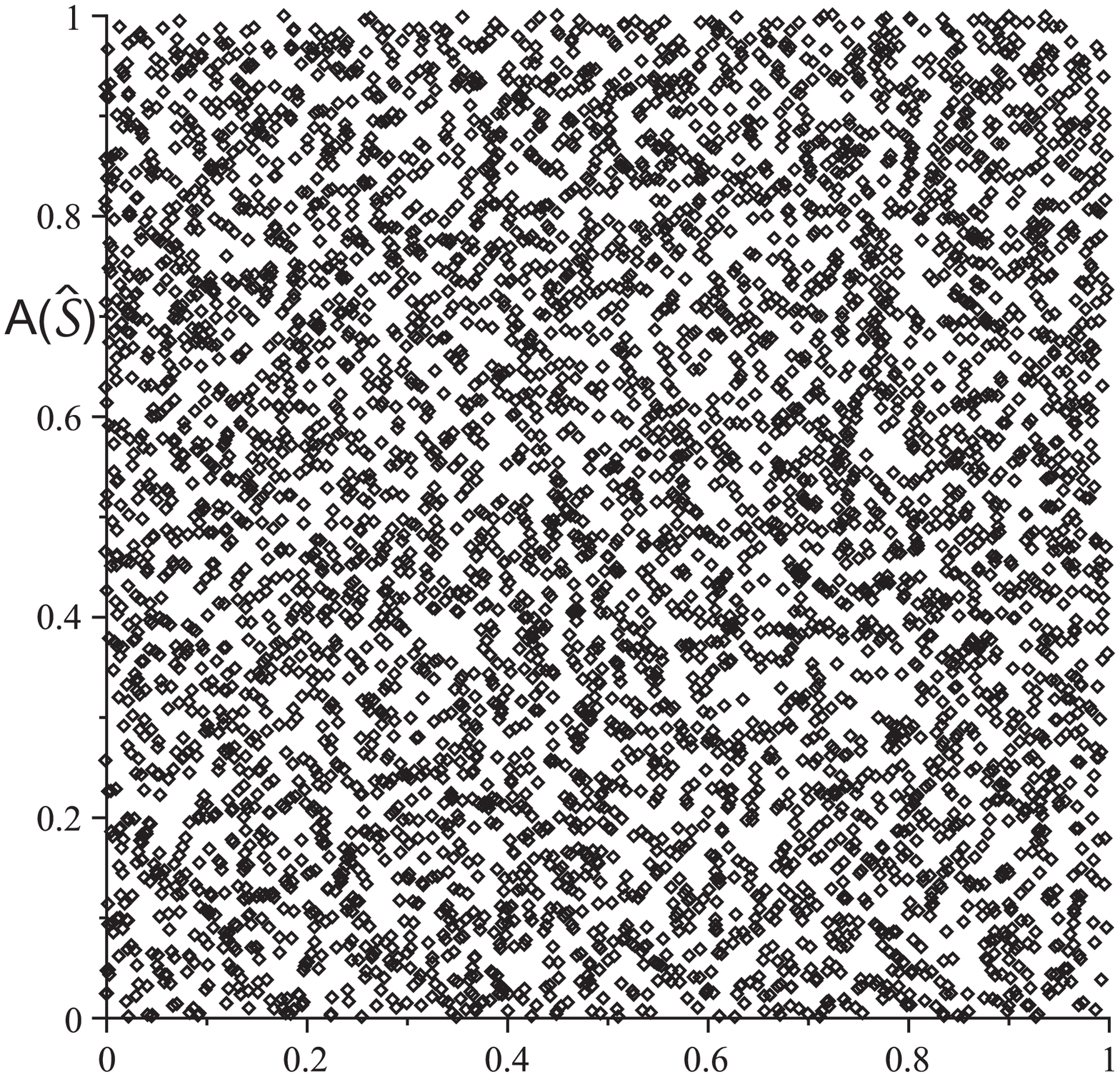}
  \includegraphics[width=5cm]{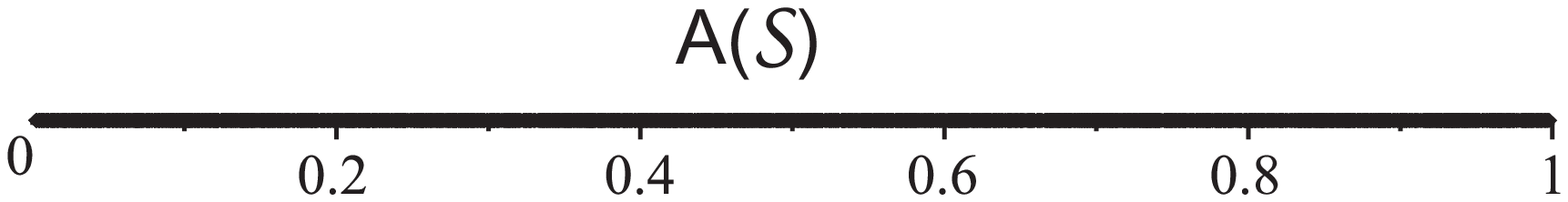}\\
  \caption{Chaos game, for 40000 iterations, showing that $A(\hat{\mathcal{S}})=A(\mathcal{S})^2$.}\label{attrac2xmod1}
\end{figure}

Indeed,
\begin{align*}
  \int f(x) \mathcal{L}_{\mathcal{S}}(dx,  dx) & =\int \int B_{\mathcal{S}}(f)(x,y)dx  \,dy \\
   & =\int \int  p_0 (x,y) f(\phi_0 (x,y)) + p_1 (x,y) f(\phi_1 (x,y)) dx  \, dy \\
   & =\int \int  \frac{1}{2} f\left(\frac{1}{2} x + \frac{0}{2}\right) + \frac{1}{2} f\left(\frac{1}{2} x + \frac{1}{2}\right) dx  \, dy \\
   & =\int \int  P_{-\ln 2} (f)(x) dx dy=\int dy \int  f(x) P_{-\ln 2}^*(dx) \\
   & = \int  f(x) dx,\\
\end{align*}
thus $\mathcal{L}_{\mathcal{S}}(dx, dx)=dx$, what means that the Hutchinson measure $\mu_{\mathcal{S}}$ is equal to the Lebesgue measure $dx$. Since $A(\mathcal{S})=\supp \mu_{\mathcal{S}}$ we get $A(\mathcal{S})=[0,1]$.

In order to prove the second claim, we consider the extended operator
\begin{align*}
  \int g(x,y) \mathcal{L}_{\hat{\mathcal{S}}}(dx \,dy) & =\int \int B_{\hat{\mathcal{S}}}(g)(x,y) dx \, dy \\
  & =\int \int  p_0 (x,y) g\left(\hat{\phi}_0 (x,y)\right) + p_1 (x,y) g\left(\hat{\phi}_1 (x,y)\right)  dx  \, dy \\
  & =\int \int  \frac{1}{2} g\left(y,\frac{1}{2} x + \frac{0}{2}\right) + \frac{1}{2} g\left(y, \frac{1}{2} x + \frac{1}{2}\right)  dx  \, dy \\
  & =\int \int  P_{-\ln 2} (g(y,\cdot))(x)  dx \, dy= \int \int  g(y,x)  P_{-\ln 2}^*(dx) dy \\
  & =\int \int  g(y,x)  dx \, dy=\int \int  g(x,y)  dx \, dy,
\end{align*}
thus $\mathcal{L}_{\hat{\mathcal{S}}}(dx  \,dy)=dx  \,dy$, what means that the extended Hutchinson measure $\alpha$ is equal to the Lebesgue measure $dx  \, dy$ in $[0,1]^2$. Since $A(\hat{\mathcal{S}})=\supp \alpha$ we get $A(\hat{\mathcal{S}})=[0,1]^2$.
\end{example}

\addcontentsline{toc}{section}{References}
\bibliographystyle{plain}

\end{document}